\newtheorem{theorem}{Theorem}[section]
\newtheorem*{theorem*}{Theorem}
\newtheorem{proposition}[theorem]{Proposition}
\newtheorem{corollary}[theorem]{Corollary}
\theoremstyle{definition}
\newtheorem{definition}[theorem]{Definition}
\theoremstyle{remark}
\newtheorem{remark}[theorem]{Remark}
\newtheorem{example}[theorem]{Example}
\newcommand{\mA}{\mathcal{A}}
\newcommand{\mH}{\mathcal{H}}
\newcommand{\mF}{\mathcal{F}}
\newcommand{\Res}{\mathrm{Res}}
\newcommand{\bS}{\mathbb{S}}
\newcommand{\bbR}{\mathbb{R}}
\newcommand{\frakp}{\mathfrak{p}}
\newcommand{\CC}{\mathbb{C}}
\begin{document}
\title{Spherical conical metrics and harmonic maps to spheres}
\begin{abstract}
A spherical conical metric $g$ on a surface $\Sigma$ is a metric of constant curvature $1$ with finitely many isolated conical singularities. The uniformization problem for such metrics remains largely open when at least one of the cone angles exceeds $2\pi$. The eigenfunctions of the Friedrichs Laplacian $\Delta_g$ with eigenvalue $\lambda=2$ play a special role in this problem, as they represent local obstructions to deformations of the metric $g$ in the class of spherical conical metrics. In the present paper we apply the theory of multivalued harmonic maps to spheres to the question of existence of such eigenfunctions. In the first part we establish a new criterion for the existence of $2$-eigenfunctions, given in terms of a certain meromorphic data on $\Sigma$. As an application we give a description of all $2$-eigenfunctions for metrics on the sphere with at most three conical singularities. The second part is an algebraic construction of metrics with large number of $2$-eigenfunctions via the deformation of multivalued harmonic maps. We provide new explicit examples of metrics with many $2$-eigenfunctions via both approaches, and describe the general algorithm to find metrics with arbitrarily large number of $2$-eigenfunctions.

%
%We show two new connections between the spectral theory of spherical conical metrics and multi-valued harmonic maps. The first part is a new criterion for the existence of extra 2-eigenfunctions for spherical conical metrics, given by the existence of certain meromorphic forms. As an application we prove the non-existence of such extra 2-eigenfunctions for metrics with at most three conical singularities. The second part is an algebraic construction of extra eigenfunctions via the deformation of multi-valued harmonic maps. We give new examples of extra eigenfunctions via both approaches, and describe the general algorithm to find metrics with arbitrarily large number of 2-eigenfunctions.  
\end{abstract}

\author[M. Karpukhin]{Mikhail Karpukhin}
\address{Mathematics 253-37, Caltech, Pasadena, CA 91125, USA
}
\email{mikhailk@caltech.edu}
\author[X. Zhu]{Xuwen Zhu}
\address{Department of Mathematics, Northeastern University, Boston, MA 02115}
\email{x.zhu@northeastern.edu}
\maketitle

\section{Introduction}
The study of spectral behaviour of metrics with singularities has a long history and has interesting applications in geometry. In this paper, we consider positive constant curvature metrics on compact Riemann surfaces with conical singularities, which is a subject that has attracted considerable attention in recent years. One central question is the following ``singular uniformization problem'': given a set of conical data consisting of cone angles and cone positions on a Riemann surface, does there exist a positive constant curvature metric with prescribed conical singularities? The answer, when at least one of the cone angles is bigger than $2\pi$, is complicated and not completely understood. One difficulty is that  the linearized operator of the nonlinear Liouville equation for positive constant curvature metrics, given by $\Delta-2$,  is not always invertible (here $\Delta$ is the Laplace--Beltrami operator).  In the work of Mazzeo and the second author~\cite{MZ1, MZ2}, the deformation of such metrics and the local structure of moduli spaces was studied, and it was shown that the local deformations of a spherical conical metric are determined by the spectral behavior of the Laplacian. In particular, the metric can deform freely if $2$ is not in the spectrum of the Friedrichs Laplace operator, and has local deformation obstructions otherwise. 
Moreover, there is a pairing formula that uses the local expansions of these eigenfunctions with eigenvalue $2$ to dictate the exact directions of obstructions.  Therefore, in order to understand the singular locus of the moduli space, it is natural to ask if one can find special spherical conical metrics with eigenvalue $2$ and determine the multiplicity of the eigenvalue. 

Following~\cite{UY, Eremenko, CWWX} we study conical spherical metrics $g$ using their {\em developing map}. Roughly speaking,  a developing map of $g$ is a multivalued meromorphic map $f$ on $\Sigma$ such that $g=f^*g_{\mathrm{st}}$, where $g_{\mathrm{st}}$ is constant curvature $1$ metric on $\overline{\mathbb{C}}$, see Section~\ref{s:SphCo} for more details. It is easy to see that the components of the stereographic projection of $f$ are bounded  $2$-eigenfunctions of $\Delta_g$. In general, these components are all multivalued. However, when the monodromy of $f$ has a simple form, some of these components are single-valued. In particular, if the monodromy is trivial (or reducible), then the developing map $f$ gives rise to $3$-dimensional ($1$-dimensional) family of $2$-eigenfunctions.  The local obstructions induced by these ``trivial'' eigenfunctions are easy to compute. However, in order to determine the admissible directions of deformation one needs to know all the 2-eigenfunctions.  Thus, a natural question is whether there exist any other $2$-eigenfunctions. This brings us to the following definition.
%
%Depending on the monodromy group, certain conical spherical metrics possess $2$-eigenfunctions. For example, if monodromy of $g$ is trivial, then $g$ is a pullback of a round metric via a branched cover over $\mathbb{S}^2$ and the pullbacks of $2$-eigenfunctions on $\mathbb{S}^2$ yield a $3$-dimensional family of $2$-eigenfunctions of $\Delta_g$. Similarly, if the monodromy of $g$ is reducible, then one can find a $2$-eigenfunction expressed in terms of a stereographic projection of a developing map, see e.g.~\cite{XZ}. 
\begin{definition}
\label{def:extra}
A function $u$ in the Friedrichs domain of a spherical conical metric $g$ is said to be an {\em extra eigenfunction} if:
\begin{itemize}	
\item $u$ is a 2-eigenfunction for the Beltrami-Laplace operator of $g$, i.e. $\Delta_{g}u=2u$; 
\item $u$ is not obtained by the stereographic projection of a developing map of $g$.
\end{itemize}
\end{definition}
%\begin{remark}
%In~\cite{XZ}, Xu and the second author showed that there no extra eigenfunctions that are in the domain of the {\em holomorphic extension} of the Laplace operator, see~\cite{XZ} for details on the holomorphic extension.
%%i.e. those functions should have extra vanishing properties near cone points, 
%%
%%  We will show that these eigenfunctions are recovered by the projection of developing maps. We would like to understand if there are any other special eigenfunctions besides the ones discussed before.
%%
%%Relating to~\cite{XZ}, the second condition above is equivalent to the following: $g$ is irreducible; or $g$ is reducible but $u$ is not in the holomorphic extension. 
%\end{remark}

If the monodromy is trivial, then the developing map $f$ is a branched cover, see e.g.~\cite{CWWX}. For branched covers the existence of extra eigenfunctions has been extensively studied using the theory of minimal surfaces and harmonic maps, see e.g.~\cite{Ejiri, Kotani, MR, Nayatani} and references therein. The main goal of the present paper is to extend these results to the case of non-trivial monodromy.

In the first part of this paper we show a criterion for the existence of extra eigenfunctions in terms of certain meromorphic data on $\Sigma$. This can be seen as a generalization of the results in~\cite{MR, Nayatani}. One first observes that any extra eigenfunction can be written as a support function of a minimal surface in $\mathbb{R}^3$ with Gauss map $f$. Thus, the existence of such minimal surfaces can then be studied using the Weierstrass representation. For the exact statement of the criterion we refer to Theorem~\ref{th1}. As an application we obtain the following.

\begin{theorem}
Let $g$ be a conical spherical metric on the sphere $\mathbb{S}^2$ with at most $3$ cone points. Then $g$ does not have any extra eigenfunctions.
\end{theorem}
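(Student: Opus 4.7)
The plan is to apply the criterion of Theorem~\ref{th1}: every extra eigenfunction on $(\Sigma,g)$ arises, via the Weierstrass representation, from a meromorphic $1$-form $\omega$ on $\Sigma$ with prescribed pole structure at the cone points (dictated by the cone angles and the local form of the developing map $f$), subject to closure and period conditions ensuring that the associated minimal surface in $\mathbb{R}^3$ with Gauss map $f$ is single-valued. In our setting $\Sigma = \mathbb{S}^2 = \overline{\mathbb{C}}$, so the problem reduces to a finite-dimensional linear-algebra problem over $\mathbb{C}$.

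First I would normalize the cone points under the $\mathrm{PSL}(2,\mathbb{C})$-action to $\{\infty\}$, $\{0,\infty\}$, or $\{0,1,\infty\}$, and write down the local form of the developing map at each cone point in terms of its cone angle. By a Riemann--Roch computation on $\overline{\mathbb{C}}$, the space $V$ of meromorphic $1$-forms with the pole orders permitted by Theorem~\ref{th1} is finite-dimensional and explicit. Inside $V$ there is a tautological subspace $V_0$ of $\omega$'s whose associated Weierstrass data produces (up to ambient translation) the trivial minimal surfaces whose support functions are exactly the components of the stereographic projection of $f$; these correspond to the non-extra $2$-eigenfunctions. The goal is therefore to prove $V = V_0$ after all period conditions are imposed.

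Second, I would verify $V = V_0$ case by case. For zero or one cone point the developing map is (a Möbius transformation of) the identity on $\mathbb{S}^2$, and $V$ has dimension three, exactly matching $V_0$. For two cone points (the ``football'' metrics) the developing map is given by a power $z \mapsto z^\beta$, and one computes $V$ directly, checking that the two period conditions around the cone points force $V = V_0$. For three cone points one uses that the developing map is rigidly determined by the cone data up to $\mathrm{PSU}(2)$-action (as in the classification of~\cite{Eremenko, UY, CWWX}), and again performs the period-matching to show any $\omega \in V \setminus V_0$ is obstructed.

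The main obstacle will be the careful bookkeeping at cone points whose angle is an integer multiple of $2\pi$, i.e.\ those with reducible or trivial local monodromy. At such points the a priori bound on the pole order of $\omega$ permitted by the Friedrichs regularity in Theorem~\ref{th1} is larger, so $V$ acquires potentially extra dimensions that must be killed by the global period conditions coming from the remaining cone points. This delicate balance is precisely what the hypothesis of at most three cone points ensures; as the second part of the paper shows, with four or more cone points one can genuinely produce extra eigenfunctions, so a sharp dimension count is essential here.
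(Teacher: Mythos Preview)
Your proposal misses the key simplification that makes the paper's proof a one-liner. Theorem~\ref{th1} contains condition~(c): the quadratic differential $\sigma = \omega\,df$ is a \emph{single-valued} meromorphic quadratic differential on $\Sigma$ with at most a simple pole at each cone point. On $\mathbb{S}^2$ the canonical bundle has degree $-2$, so any nonzero meromorphic quadratic differential has total divisor degree $-4$; with at most three simple poles and no other singularities this is impossible, and the proof is finished. No normalization of cone points, no period conditions, no case analysis on integer vs.\ non-integer angles is needed.

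Your route via the space $V$ of $1$-forms $\omega$ has a genuine gap. The form $\omega$ in Theorem~\ref{th1} lives on the universal cover $\widetilde{\Sigma^*}$ and is generally \emph{multivalued} on $\mathbb{S}^2$ (its monodromy is inverse to that of $df$); it is $\sigma$, not $\omega$, that descends to a global meromorphic object on $\Sigma$. Thus the Riemann--Roch computation you propose for $\omega$ on $\overline{\mathbb{C}}$ is not well posed as stated. Also, your ``tautological subspace'' $V_0$ is just $\{0\}$: the proof of Proposition~\ref{p:X1} shows that $u=(\phi,s)$ corresponds exactly to $F=0$, i.e.\ $\omega=0$ (equivalently $X$ constant), not to a three-dimensional family of nonzero $\omega$'s. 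So the target is $V=\{0\}$, and the clean way to see it is to pass to $\sigma$ and use the degree count.
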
    
The criterion in Theorem~\ref{th1} is especially effective when most cone points of $g$ have cone angles that are multiple of $2\pi$. In particular, it allows us to construct explicit examples of spherical conical  metric with extra eigenfunctions when there are exactly two non-integer angles, see Example~\ref{ex1}.

In the second part of this paper we provide an algebraic  construction of spherical metrics with extra eigenfunctions by using the deformation theory of harmonic maps to spheres. This approach is based on viewing the developing map $f$ as a multivalued harmonic map to $\mathbb{S}^2$ considered as a subset of $\mathbb{S}^n$ and then constructing $f$ as a limit of linearly full multivalued harmonic maps to $\mathbb{S}^n$. We refer to Theorem~\ref{S2deformation:thm} for the precise statement. These ideas have been previously applied for branched covers of $\mathbb{S}^2$, see~\cite{Ejiri0,Ejiri,Kotani} and references therein. In the case of branched covers all harmonic maps are single-valued, thus, our results can be seen as a generalization to multivalued setting. 
%\begin{theorem*}
%Let $\Phi\colon \Sigma^*\to \mathbb{S}^{2m}$, $m>1$ be a linearly full CTIHMM (conical totally isotropic harmonic multivalued map) with non-trivial monodromy group $M\subset SO(2,\mathbb{R})$. Assume that the map $\Psi^*$ continuously extends to points $\{p_j\}$ in the sense that for each $j$ there is an isotropic space $L_j'$ of dimension $(m-1)$ or $(m-2)$ such that $\lim_{p\to p_j}\Psi^*(p)\subset L'_j$. 
%Then there exists a one-parameter subgroup $A_t\subset SO(2m-1,\mathbb{C})$ such that $A_t\Phi\to \Phi_\infty$ in 
%%\xuwen{what topology, $C_{0}^{\infty}(\Sigma^{*})?$}  
%$C_{0}^{\infty}(\Sigma^{*})$-topology, where $\Phi_\infty\colon \Sigma^*\to \mathbb{S}^{2m}$ is a CTIHMM with its image in a proper totally geodesic subsphere $\mathbb{S}^{2}$. In particular, the metric $g_{\Phi_\infty}$ is a reducible conical spherical metric with the same monodromy representation as $\Phi$ and has $2(m-1)$ extra eigenfunctions.
%\end{theorem*}

Theorem~\ref{S2deformation:thm} provides an efficient way of constructing both the metric and the extra eigenfunctions. We give two applications. First, an explicit family of spherical conical  metrics and the expressions for their extra eigenfunctions can be found in Example~\ref{intro:example}. Second, the same construction yields the existence of spherical metrics with arbitrary many extra eigenfunctions and Section~\ref{s:algorithm} contains an algorithm designed to find such metrics.

Finally, we remark that the techniques to study harmonic maps developed in~\cite{Ejiri, Kotani, MR, Nayatani} have proved to be extremely useful in the study of sharp isoperimetric eigenvalue inequalities, where the $2$-eigenfunctions of $\Delta_g$ also play a special role. We refer to the papers~\cite{KarpukhinRP2, KNPP, NS} for applications to several long-standing conjectures. 

%The techniques of harmonic maps have been applied to the study of spectral theory and it has produced many exciting results in the recent years. The works of the first author and collaborators \cite{...}  resolved several important conjectures on extremal metrics, where metrics with conical singularities naturally appear.  \xuwen{TODO}
% We refer readers to those papers for further reference. 

\subsection*{Organization of the paper} This paper is organized as follows. In~\S\ref{s:SphCo} we review the geometry of spherical conical metrics and existing results on the related spectral theory problems. In~\S\ref{s:criterion} we prove the equivalence condition in Theorem~\ref{th1} and discuss an explicit example. In~\S\ref{s:deform} we connect the existence of extra eigenfunctions with the deformation of harmonic maps to prove Theorem~\ref{CS2deformation:thm} and give another construction that produces an arbitrarily large number of eigenfunctions.  

\noindent\textbf{Acknowledgements: } We would like to thank Dima Jakobson for suggesting the collaboration. The second author is grateful to Rafe Mazzeo and Bin Xu for many helpful discussions. The first author is partially supported by NSF DMS-1363432. The second author is partially supported by NSF DMS-2041823.

\section{Spherical conical metrics}\label{s:SphCo}

In this section we give a brief review of the geometry of spherical conical metrics and its spectral behavior; for more details we refer to~\cite{BMM, Ere2, MZ2, MP2} and the reference therein. Let $\Sigma$ be a compact Riemann surface, $\vec \beta = (\beta_{1}, \dots, \beta_{k})\in \bbR_{+}^{k}$ a $k$-tuple and $\frakp=\{p_{1}, \dots, p_{k}\}$ a set of $k$ distinct points on $\Sigma$, a spherical conical metric $g$ with conical data $(\Sigma, \vec \beta, \frakp)$ is a constant curvature one metric on the punctured surface 
\begin{equation}\label{e:sigmap}
\Sigma_{\frakp}:=\Sigma\setminus \{p_{1}, \dots, p_{k}\}
\end{equation} 
with conical singularities at each $p_{j}$ with angle $2\pi\beta_{j}$. Here a conical singularity means that there exists a conformal coordinate centered at $p_{j}$ such that locally the metric is given by $g=e^{2u}|dz|^2$ where $u-(\beta_j-1)\log\,|z|$ extends to $z=0$ continuously. 

Spherical conical metrics can be identified with solutions to the singular Liouville equation:
\begin{equation}
\Delta_{g_{0}}u-e^{2u}+K_{g_{0}}=0
\end{equation}
where the $\Delta_{g_{0}}$ is the Laplace-Beltrami operator in the Friedrichs extension for a model metric $g_{0}$. Here the Friedrichs extension is the self-adjoint extension of $\Delta_{g_{0}}$ from compactly supported smooth functions on the punctured surface  $\Sigma_{\frakp}$ to bounded functions on $\Sigma$. The existence and uniqueness of solutions to the above equation does not always hold. To understand these solutions and corresponding metrics from the deformation point of view, in~\cite{MZ2} the second author and collaborator studied the linearization of this nonlinear equation, given by the following operator
$$
\Delta_{g}-2
$$
where $g$ is a spherical conical metric that one would like to perturb. 
Using techniques developed in~\cite{MZ1} it was shown that the local perturbation theory around a spherical metric $g$ is closely related to the kernel of the linear operator above. In particular, the dimension of the kernel  and the local expansion of these eigenfunctions determines the obstruction of the local deformation (cf. \cite[Theorem 4]{MZ2}).  This provides the motivation for us to identify all these eigenfunctions, as the spherical conical metrics with those eigenfunctions are where the singularities of the moduli space occur.   

One of the first understanding towards this direction was through the study of a special class of spherical conical metrics called reducible metrics. Such metrics are defined by the monodromy of their developing maps. Here we briefly recall the definition.
For any spherical conical metric $g$, there exists a multivalued meromorphic function
$$f: \Sigma_{\frakp}  \to \overline{\mathbb{C}}$$
called a {\it developing map} of $g$, defined by the following three conditions (cf. \cite[Lemma 2.1 and Lemma 3.1]{CWWX}):
\begin{enumerate}
\item (Pull-back) Denote the standard metric on the sphere by $g_{\rm st}=\frac{4|dw|^2}{(1+|w|^2)^2}$ for $w\in \overline{\CC}$, then  the metric $g$ is given by the pullback by $f$, i.e. $g=f^{*}g_{\rm st}$  on $\Sigma_{\frakp}$;
\item (Monodromy) The monodromy of $f$ is contained in ${\rm PSU}(2)$;
\item (Angle) Near each $p_j$, the principal singular term of the Schwarzian derivative of $f$ is given by $\frac{1-\beta_j^2}{2z^2}$.
\end{enumerate}
We note here that for a given spherical conical metric, its developing map is not unique, and all such maps are related by M\"obius transformations in ${\rm PSU}(2)$. For a given metric, the monodromy of all its developing maps are contained in the same conjugacy class of ${\rm PSU}(2)$.
\begin{definition}
A spherical conical metric $g$ is called {\it reducible} if there exists  a developing map with monodromy in  ${\rm U}(1)$. 
%Such a developing map of $g$ is called {\it multiplicative}. 
The metric $g$ is called {\it trivially reducible} if the monodromy of its developing map is trivial.
\end{definition}

Let us now clarify the discussion before Definition~\ref{def:extra} and show how to obtain $2$-eigenfunctions from a developing map. 
%More specifically, with an additional assumption that the eigenfunction is also in the holomorphic extension, we have the following:
%\begin{theorem}[\cite{XZ}]
%Let $g$ be a spherical conical metric. The dimension of the kernel of $\Delta_{g}-2$ in the intersection of Friedrichs extension and holomorphic extension is 0 if  $g$ is irreducible, 1 if $g$ is nontrivially reducible, and 3 if $g$ is trivially reducible.
%\end{theorem}
Recall the definition of harmonic maps which will be used later.
\begin{definition}
Let $\phi: (M,g)\rightarrow (N,h)$ be a smooth map between two manifolds. We say $\phi$ is a {\em harmonic map} if it is a critical point for the energy
$$
\frac{1}{2}\int_{M}|d\phi|_{g,h} dv_{g}.$$
\end{definition}
When the target $(N,h)$ is the unit sphere $\mathbb{S}^{n}$, a standard computation shows that $\phi$ is harmonic if and only if
$$
\Delta_{g}\phi = |d \phi|^{2}_{g}\phi.
$$
If in addition $M$ is a surface, one can let $g_{\phi}=\frac{1}{2}|d\phi|_{g}^{2}g$ then by the conformal invariance of Laplacian we have
$$
\Delta_{g_{\phi}} \phi= 2\phi,
$$
that is, $\phi$ is a 2-eigenfunction for metric $g_{\phi}$. Now we connect this to the developing maps of a spherical metric.

 Let $f$ be a developing map viewed as a multivalued meromorphic function on 
 $\Sigma_{\frakp}$. 
 Applying stereographic projection, we can identify $f$ with a multivalued harmonic map 
 $\phi\colon\Sigma_{\frakp} \rightarrow \bS^{2}\subset\mathbb{R}^3$ defined as 
\begin{equation}\label{e:phi}
\phi: = \frac{1}{|f|^2+1}(2\Re f, 2\Im f, |f|^2-1).
\end{equation}
The induced metric $g=g_\phi$ is given by
$$
g=\frac{4|f'|^{2}}{(|f|^{2}+1)^{2}}|dz|^{2}
$$ 
and the Laplace-Beltrami operator is given by
$$
\Delta_{g}u=-\frac{(|f|^{2}+1)^{2}}{|f'|^{2}}u_{z\bar z}.
$$
Write the harmonic map as $\phi=(\phi_{1}, \phi_{2}, \phi_{3})$, then the three (potentially multivalued) components $\phi_{j}$ satisfy $(\Delta-2)\phi_{j}=0$ pointwise for $j=1,2,3$.
 
We also remark that
$$
|\phi_{z}|^{2}=\frac{2|f'|^{2}}{(|f|^{2}+1)^{2}}.
$$
Hence the following relation is true and will be used later
\begin{equation}\label{e:eigen}
\Delta_{g}u=2u \ \Leftrightarrow \ u= -u_{z\bar z}/|\phi_{z}|^{2}.
\end{equation}

If the metric is reducible, then there is at least one point $s\in\mathbb{S}^2$ such that it is fixed by the monodromy group. As a result the inner product $(\phi,s)$ is invariant under the monodromy group and is a well-defined bounded function on $\Sigma_{\frakp}$, i. e. it is in the domain of the Friedrichs extension. This observation was made in~\cite{XZ}.  
 If the metric is trivially reducible, then $(\phi,s)$ is well defined for any $s\in \bS^{2}$, hence the components give three-dimensional space of eigenfunctions.

\begin{remark}
Even through a priori none of the three components of $\phi$ need to be single-valued for a conical metric $g$, the following is true: if there exists $s\in \mathbb{S}^{2}$ such that $(\phi,s)$ is a single-valued function on $g$, then $g$ is reducible and $s$ is a fixed point of the monodromy.
\end{remark}

\begin{remark}\label{r:mono}
Under the identification~\eqref{e:phi}, there will be two kinds of monodromy groups used in this paper: $\operatorname{PSU}(2)$ acting on $\overline{\mathbb{C}}$ for the developing map $f$, and $\operatorname{SO}(3)$ acting on $\mathbb{S}^2 \subset\mathbb{R}^{3}$ for the harmonic map $\phi$. The action of $\operatorname{PSU}(2)$ and $\operatorname{SO}(3)$ are related by the classical identification, i.e. $\mathfrak{su}(2)\equiv \mathbb{R}^{3}$ through the basis of Pauli matrices and $\operatorname{PSU}(2)$ acts on $x\in \mathfrak{su}(2)$ via conjugation which is indeed an orthogonal transformation. Specifically, for any $A\in \operatorname{PSU(2)}$ written as
$$
A=
\begin{bmatrix}
\alpha & \beta\\
-\bar\beta & \bar \alpha 
\end{bmatrix}, \
|\alpha|^{2}+ |\beta|^{2}=1,
$$
it acts on $f: \Sigma_{\frakp} \to \overline{\mathbb{C}}$ as
$$
Af=\frac{\alpha f +\beta}{-\bar \beta f+ \bar \alpha}.
$$
The corresponding monodromy of a harmonic map $\phi$ is given by
$$
\mathcal{A}\phi=\frac{1}{|f|^{2}+1}\mathcal{A}\begin{pmatrix}
2\Re f\\
2 \Im f\\
|f^{2}|-1
\end{pmatrix}
$$
where the matrix $\mathcal{A}$ is
\begin{equation}\label{e:monoid}
\mathcal{A}=\begin{bmatrix}
\Re (\alpha^{2}-\beta^{2}) & \Im (\alpha^{2}+\beta^{2})& -2\Re (\alpha\beta) \\
-\Im(\alpha^{2}-\beta^{2}) & \Re (\alpha^{2}+\beta^{2}) & 2\Im(\alpha\beta)\\
2\Re(\alpha\bar \beta) & 2\Im (\alpha \bar \beta) & |\alpha|^{2}-|\beta^{2}|
\end{bmatrix}
\in\operatorname{SO}(3)
\end{equation}
which is exactly the classical identification with $A\in \operatorname{PSU}(2)$.
For more explanations related to spherical conical metrics, see for example~\cite[Section 2]{MP}. In this paper we use these two monodromy groups interchangeably. 
\end{remark}

In the rest of the paper we are interested in the following question: are there any other eigenfunctions with eigenvalue 2 besides those ones given by the form of $(\phi,s)$ or in the language of Definition~\ref{def:extra}, are there any extra eigenfunctions?

\section{Extra eigenfunctions}\label{s:criterion}
In this section we are going to use tools from harmonic maps to study extra eigenfunctions.  
Suppose a metric $g$ has an extra eigenfunction, i.e. there exists a single-valued function $u$ in the Friedrichs domain with  $u\in\ker(\Delta-2)\setminus\{(\phi, s): s\in \bS^{2}\}$. To any such $u$ we associate a multivalued map $X\colon \Sigma_{\frakp}\to\mathbb{R}^3$ as follows
\begin{equation}\label{e:X}
X = u\phi + \frac{1}{|\phi_z|^2}(u_z\phi_{\bar z} + u_{\bar z}\phi_z),
\end{equation}
where $\phi$ is the multivalued harmonic map defined in~\eqref{e:phi} and $\Sigma_{\frakp}$ is defined as~\eqref{e:sigmap}.
\begin{proposition}\label{p:X1}
If $u$ is an extra eigenfunction and the multivalued map $X\colon \Sigma_{\frakp}\to\mathbb{R}^3$ is defined by~\eqref{e:X}, then $X$ satisfies the following properties:
\begin{itemize}
\item[1)] $X$ is non-constant;
\item[2)] $X$ is a conformal harmonic map, i.e. a (multivalued) branched minimal immersion;
\item[3)] $\phi$ is the Gauss map of $X$;
\item[4)] The monodromy groups of $X$ and $\phi$ coincide, i.e the support function $(X,\phi)$ is well-defined on $\Sigma_{\frakp}$. Moreover, the support function extends to a bounded function on $\Sigma$. 
\end{itemize}
\end{proposition}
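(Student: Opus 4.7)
The plan is to derive all four properties from a single computation: a structural simplification of $X_z$ that is forced by the harmonic-map identities for $\phi$. First I will collect the pointwise identities satisfied by $\phi\colon \Sigma_{\frakp}\to \mathbb{S}^2\subset \mathbb{R}^3$. With $h = |\phi_z|^2 = (\phi_z,\phi_{\bar z})$, these are the orthogonality relations $(\phi,\phi_z) = (\phi,\phi_{\bar z}) = 0$, the conformality $(\phi_z,\phi_z)=0$, the harmonic map equation $\phi_{z\bar z} = -h\phi$ (each already available from the excerpt), together with the Gauss equation $(\log h)_{z\bar z} = -h$, which expresses that $g=f^*g_{\rm st}$ has constant curvature one. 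Differentiating the first three relations in $z$ yields $(\phi_{zz},\phi) = 0$, $(\phi_{zz},\phi_z) = 0$, and $(\phi_{zz},\phi_{\bar z}) = h_z$; expansion in the generic basis $\{\phi,\phi_z,\phi_{\bar z}\}$ of $\mathbb{C}^3$ then forces the structural identity $\phi_{zz} = (h_z/h)\phi_z$.

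Property~(4) is then quickest. Using $(\phi,\phi_z)=(\phi,\phi_{\bar z})=0$ and $(\phi,\phi)=1$, one computes directly from \eqref{e:X} that $(X,\phi)=u$. Under a monodromy element $\mathcal{A}\in \mathrm{SO}(3)$ sending $\phi\mapsto \mathcal{A}\phi$, the derivatives transform by the same $\mathcal{A}$, $h$ is invariant, and $u$ is single-valued by hypothesis, so the defining formula gives $X\mapsto \mathcal{A}X$. Hence $X$ and $\phi$ share the same monodromy representation; $(X,\phi) = u$ is then single-valued on $\Sigma_{\frakp}$, and boundedness on $\Sigma$ is simply the Friedrichs-domain membership of $u$.

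For (2) and (3), I substitute $\phi_{z\bar z} = -h\phi$, $\phi_{zz} = (h_z/h)\phi_z$, and the eigenfunction equation $u_{z\bar z} = -hu$ into $\partial_z X$. All terms parallel to $\phi$ and to $\phi_z$ cancel, leaving the remarkably clean formula
\[
X_z \;=\; \partial_z\!\left(\frac{u_z}{h}\right)\phi_{\bar z}.
\]
From this, conformality $(X_z,X_z) = 0$ follows from $(\phi_{\bar z},\phi_{\bar z})=0$, and the Gauss-map property $(X_z,\phi) = 0$ follows from $(\phi_{\bar z},\phi)=0$. For harmonicity I differentiate once more, using the conjugate identity $\phi_{\bar z\bar z} = (h_{\bar z}/h)\phi_{\bar z}$, and obtain $X_{z\bar z}$ as a scalar multiple of $\phi_{\bar z}$. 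Showing the scalar vanishes requires simultaneously invoking $u_{z\bar z}=-hu$ and the Gauss equation $(\log h)_{z\bar z} = -h$. This is the main obstacle: the cancellation occurs precisely because the eigenvalue is $2$ and the target has curvature $1$, and it is exactly what makes the entire construction work.

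Finally, (1) follows by contradiction: if $X \equiv c$ for some $c \in \mathbb{R}^3$, then $u = (X,\phi) = (\phi,c)$, contradicting that $u$ is an \emph{extra} eigenfunction.
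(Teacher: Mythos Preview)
Your proof is correct and follows the same overall architecture as the paper: establish the frame identities, derive $\phi_{zz}=(h_z/h)\phi_z$, reduce $X_z$ to the single term $\partial_z(u_z/h)\,\phi_{\bar z}$, and read off properties (1)--(3). Your treatment of (4) via the direct identity $(X,\phi)=u$ and the manifest $\mathrm{SO}(3)$-equivariance of the defining formula is actually more explicit than the paper's, which essentially leaves (4) implicit and only invokes $u=(X,\phi)$ in passing when proving (1).

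The one genuine divergence is in the harmonicity step. You compute the scalar in front of $\phi_{\bar z}$ in $X_{z\bar z}$ and kill it by combining the eigenfunction equation $u_{z\bar z}=-hu$ with the Gauss equation $(\log h)_{z\bar z}=-h$; this works and, as you note, makes transparent why the eigenvalue~$2$ and curvature~$1$ are exactly the values needed. The paper instead observes that $X_{z\bar z}$ is \emph{real} (since $X$ is $\mathbb{R}^3$-valued), so from $X_{z\bar z}=G\phi_{\bar z}$ one gets $G\phi_{\bar z}=\bar G\phi_z$; since $\phi_z$ and $\phi_{\bar z}$ are linearly independent this forces $G=0$. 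The paper's argument is slicker and avoids the Gauss equation entirely, but yours is equally valid and arguably more informative about the underlying rigidity.
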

\begin{remark}
If the developing map $f$ is a rational function i.e. the metric $g$ is induced by a branched cover of $\bS^{2}$, then the property $4)$ is usually formulated as "$X$ has flat ends", which means that $X$ has a well-defined tangent plane at infinity for each end of $X$.
\end{remark}

The proof of this proposition is almost identical to~\cite[p.156]{MR}. We present it here for completeness.  
\begin{proof}
Let $(\cdot,\cdot)$ denote the pointwise $\mathbb{C}$-{\bf bilinear} inner product  on $\mathbb{C}^3$ and let $\langle\cdot,\cdot\rangle$ be the usual Hermitian product. That is,
$$
(z,w)=zw, \ \langle z, w \rangle = z\bar w, \ \forall z,w\in \mathbb{C}^{3}.
$$
Since $\phi$ is conformal, we have $|\phi|^{2}=1$ and
\begin{equation}\label{e:phiz}
(\phi_{z}, \phi_{z})=\frac{1}{4} \left( (\phi_{x}, \phi_{x}) - (\phi_{y}, \phi_{y})\right) -2i (\phi_{x}, \phi_{y})=0.
\end{equation} 
Therefore
\begin{equation}\label{e:ortho}
\begin{aligned}
\langle \phi_{z}, \phi \rangle = \frac{1}{2}\partial_{z}|\phi|^{2}=0\\
\langle \phi_{\bar z}, \phi \rangle = \frac{1}{2}\partial_{\bar z}|\phi|^{2}=0\\
\langle \phi_{z}, \phi_{\bar z} \rangle = (\phi_{z}, \phi_{z})=0
\end{aligned}
\end{equation}
In other words, the vectors $\{\phi_z,\phi,\phi_{\bar z}\}$ form an orthogonal basis of $\mathbb{C}^3$ in $\langle\cdot,\cdot\rangle$. 

Since $\phi$ is a harmonic map, we also have
\begin{equation}\label{e:harm}
\phi_{\bar z z} = -|\phi_z|^2\phi.
\end{equation}
Combining~\eqref{e:phiz}, ~\eqref{e:ortho} and~\eqref{e:harm} we obtain
$$
(\phi_{zz},\phi) = -(\phi_z,\phi_z) = 0;\quad (\phi_{zz},\phi_z) = \frac{1}{2}(\phi_z,\phi_z)_z=0, 
$$
$$
(\phi_{zz},\phi_{\bar z}) = (\phi_z,\phi_{\bar z})_z - (\phi_z,\phi_{\bar z z}) = \partial_z|\phi_z|^2.
$$
As a result
\begin{equation}
\label{phizz:eq}
\phi_{zz} = \partial_z\ln|\phi_z|^2\phi_z.
\end{equation}
Therefore,
\begin{multline}\label{e:Xz}
X_z = u_{z}\left(\phi + \frac{\phi_{\bar z z}}{|\phi_z|^2}\right) + \left (u + \frac{u_{\bar z z}}{|\phi_z|^2}\right)\phi_{z} 
\\ +   u_{\bar z} \left(\partial_z\frac{1}{|\phi_z|^2} +\frac{\partial_z\ln |\phi_z|^2}{|\phi_z|^2}\right)\phi_z + F\phi_{\bar z}
\end{multline}
where
\begin{equation}\label{e:F}
F=\partial_z\left( \frac{u_z}{|\phi_z|^2} \right) = \frac{1}{|\phi_z|^2}(u_{zz} - \partial_z\ln|\phi_z|^2u_z).
\end{equation}
The first term in~\eqref{e:Xz} vanishes because of~\eqref{e:harm}, second term vanishes from~\eqref{e:eigen}, and one can check the vanishing of the third term by a direct computation. 
Therefore, we have 
\begin{equation}\label{e:Xz2}
X_z = F\phi_{\bar z}.
\end{equation}
In particular, by~\eqref{phizz:eq} $F=0$ if $u = (\phi,s)$. 

The identity $X_z = F\phi_{\bar z}$ also implies that $(X_{z}, X_{z})=F^{2}(\phi_{\bar z}, \phi_{\bar z})=0$, therefore $X$ is conformal. Additionally, $X_{\bar z} = \bar F\phi_z$, i.e. $\phi$ is the Gauss map of $X$. 

Furthermore, from~\eqref{phizz:eq} we have $\phi_{\bar z\bar z}=\tilde F \phi_{\bar z}$, and together with~\eqref{e:Xz2} this implies that $X_{z\bar z} = G\phi_{\bar z}$ where $G=F_{z}+\tilde F$. At the same time, $X_{z\bar z}$ is real, that is, $G\phi_{\bar z}=\bar G \phi_{z}$. However $\phi_{\bar z}$ and $\phi_{z}$ are orthogonal in $\mathbb{C}^{3}$, therefore $G=0$ and $X_{z\bar z}=0$.

If $X\equiv s \in \mathbb{R}^{3}$ were constant, then since $u = (X,\phi)$ one has $u = (\phi,s)$, which contradicts the definition of the extra eigenfunction.
\end{proof}

\begin{proposition}\label{p:X2}
Conversely, let $X$ be a map satisfying properties $1)-4)$. Then the support function $(X,\phi)$ is an extra eigenfunction.
\end{proposition}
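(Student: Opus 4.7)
The plan is to take $u := (X,\phi)$ as the candidate extra eigenfunction and verify the three conditions of Definition~\ref{def:extra} in turn. Property 4) gives two of them for free: $u$ is single-valued on $\Sigma_\frakp$ and extends to a bounded function on $\Sigma$, hence lies in the Friedrichs domain. What remains is to check the eigenvalue equation $\Delta_g u = 2u$ and the non-triviality $u \notin \{(\phi,s): s\in\mathbb{R}^3\}$.

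For the eigenvalue equation I would mimic the computation in Proposition~\ref{p:X1} in reverse. Differentiating $u=(X,\phi)$ and using the Gauss-map property $(X_z,\phi)=(X_{\bar z},\phi)=0$ from 3) gives $u_z=(X,\phi_z)$, so
\[
u_{z\bar z} = (X_{\bar z},\phi_z) + (X,\phi_{z\bar z}).
\]
The harmonicity of $\phi$ from \eqref{e:harm} reduces the second term to $-|\phi_z|^2 u$. The crux is to show $(X_{\bar z},\phi_z)=0$, equivalently $X_z = F\phi_{\bar z}$ for some function $F$. To see this, expand $X_z = a\phi + b\phi_z + c\phi_{\bar z}$ in the orthogonal basis of $\mathbb{C}^3$ from \eqref{e:ortho}. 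The Gauss-map condition gives $a=0$, and conformality $(X_z,X_z)=0$ combined with \eqref{e:phiz} forces $bc=0$. The case $X_z \propto \phi_z$ is then ruled out by plugging $X_z = b\phi_z$ into $X_{z\bar z}=0$, using \eqref{e:harm} and $(\phi_z,\phi)=0$ to deduce $b \equiv 0$ at immersed points of $\phi$; this would force $X$ constant, contradicting 1). Hence $X_z = F\phi_{\bar z}$, so $(X_{\bar z},\phi_z)=\bar F(\phi_z,\phi_z)=0$ by \eqref{e:phiz}, and the eigenvalue equation follows from \eqref{e:eigen}.

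For the non-triviality I would argue by contradiction: if $u=(\phi,s)$ for some constant $s\in\mathbb{R}^3$, then $(X-s,\phi)\equiv 0$; differentiating and using the Gauss-map condition yields $(X-s,\phi_z)=(X-s,\phi_{\bar z})=0$. Taking real and imaginary parts, $X-s$ is orthogonal to the real basis $\{\phi,\phi_x,\phi_y\}$ of $\mathbb{R}^3$ at every immersed point of $\phi$, so $X\equiv s$ on an open dense subset of $\Sigma_\frakp$, hence everywhere by continuity, contradicting 1).

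The main obstacle I anticipate is pinning down the correct orientation in the conformal splitting, namely $X_z\propto\phi_{\bar z}$ rather than $X_z\propto\phi_z$. Conformality alone is symmetric between the two cases, and disentangling them requires a genuine use of the harmonicity of both $X$ and $\phi$ together with the non-constancy assumption 1). Once this orientation is fixed, the rest of the argument is essentially a formal dualization of the computation that appeared in the proof of Proposition~\ref{p:X1}.
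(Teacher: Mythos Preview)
Your proof is correct and follows the same overall strategy as the paper: compute $u_{z\bar z}$ directly and reduce to showing the cross term $(X_{\bar z},\phi_z)$ vanishes, then argue non-triviality from property~1). The one place you diverge is precisely the step you flagged as the main obstacle. You establish $(X_{\bar z},\phi_z)=0$ by expanding $X_z$ in the basis $\{\phi,\phi_z,\phi_{\bar z}\}$, using conformality to get $bc=0$, and then invoking harmonicity of both maps to rule out the wrong branch $X_z\propto\phi_z$. This works, but the paper sidesteps the orientation issue entirely with a one-line identity: since $(X_z,\phi)=0$ by the Gauss-map property, differentiating in $\bar z$ and using $X_{z\bar z}=0$ gives
\[
0=(X_z,\phi)_{\bar z}=(X_{z\bar z},\phi)+(X_z,\phi_{\bar z})=(X_z,\phi_{\bar z}),
\]
which is the conjugate of what you need. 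So the ``obstacle'' you anticipated is real for your chosen route, but it dissolves if you differentiate the Gauss-map relation instead of decomposing $X_z$. Your non-triviality argument, on the other hand, is actually more detailed than the paper's, which simply asserts that $X$ non-constant forces $u\notin\{(\phi,s)\}$ without spelling out why.
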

\begin{proof}
The map $\phi$ is harmonic, i.e. $\phi_{\bar z z} = -|\phi_z|^2\phi$. And $\phi$ is a Gauss map of $X$, therefore $(\phi, X_z) = (\phi, X_{\bar z}) = 0$. Finally, $X$ is harmonic, i.e. $X_{\bar z z} = 0$. Using all these, we have
$$
0 = (X_z,\phi)_{\bar z} = (X_{z\bar z},\phi) + (X_z,\phi_{\bar z}) = (X_z,\phi_{\bar z});
$$
and 
\begin{multline}
(X,\phi)_{\bar z z} = \left((X_{\bar z},\phi) + (X,\phi_{\bar z})\right)_z = (X,\phi_{\bar z})_{z}\\ = (X_z,\phi_{\bar z}) + (X,\phi_{\bar z z}) = -|\phi_z|^2(X,\phi).
\end{multline}
Hence by~\eqref{e:eigen}, the function $u=(X,\phi)$ satisfies $\Delta_{g}u=2u$. And since $X$ is not constant, $u$ is an extra eigenfunction. 
\end{proof}

The multivalued map $X$ has different behaviour near a cone point depending on whether the point is integer or not. First notice that for a spherical metric $g$, the developing map $f$ (and hence its stereographic projection $\phi$) can be extended over a cone point with integer cone angle to be a single-valued meromorphic function nearby (cf~\cite[Lemma 4.3]{CWWX}). Therefore, for a metric $g$ with non-integer cone points $\{p_{1}, \dots, p_{j}\}$ and integer cone points $\{p_{j+1}, \dots, p_{k}\}$, it is convenient to introduce another notation of punctured surface (different from~\eqref{e:sigmap})
\begin{equation}\label{e:sigmastar}
\Sigma^{*}:=\Sigma\setminus\{p_{1}, \dots, p_{j}\}.
\end{equation}
From the discussion above,  $f$ and $\phi$ can be viewed as multivalued maps on $\Sigma^{*}$, and their monodromy in $\operatorname{PSU}(2)$ and $\operatorname{SO}(3)$ are defined correspondingly. 

We next use Weierstrass representation to reformulate the criterion of Propositions~\ref{p:X1} and~\ref{p:X2}. Define $\widetilde {\Sigma^{*}}$ as the universal cover of the punctured surface $\Sigma^*$ which is defined in~\eqref{e:sigmastar}, and fix a base point $z_{0}\in \widetilde{\Sigma^*}$. Recall that the Weierstrass representation of a function $X: \widetilde {\Sigma^{*}} \rightarrow \mathbb{R}^{3}$ is given by  
$$
X = \frac{1}{2}\Re\int (1-f^2,i(1+f^2),2f)\omega,
$$
or 
\begin{equation}\label{e:Xdef}
X(z)=X(z_{0}) + \frac{1}{2}\Re\int_{z_{0}}^{z} (1-f^2,i(1+f^2),2f)\omega,
\end{equation}
where $f: \widetilde {\Sigma^{*}}\rightarrow \overline{\mathbb{C}}$ is the lift of a developing map of $g$ (in other words, the Gauss map viewed as a meromorphic function), $\omega$ is a meromorphic $1$-form on $\widetilde{\Sigma^*}$ with poles only at the preimages of integer cone points $\{p_{j+1}, \dots, p_{k}\}$ and such that $f^2\omega$ is also meromorphic on $\widetilde{\Sigma^*}$ with poles only at the preimages of $\{p_{j+1}, \dots, p_{k}\}$. The quadratic differential defined as
 $$\sigma: = \omega df$$ 
 is the holomorphic part of the second fundamental form. It is metric invariant and, therefore, is a well-defined meromorphic quadratic differential on $\Sigma^*$. 

Let $\tilde \phi: \widetilde{\Sigma^{*}}\rightarrow \mathbb{R}^{3}$ be the lift of $\phi$.  The following theorem gives the necessary and sufficient conditions for $(X,\tilde \phi)$ to descend to a (bounded) $2$-eigenfunction on $\Sigma$.

\begin{theorem}\label{th1}
Let $g$ be a spherical conical metric, and let $f:  \widetilde {\Sigma^{*}} \rightarrow \overline{\mathbb{C}}$ be the lift of one of its developing maps. Then $g$ has an extra eigenfunction if and only if there exists a nonzero meromorphic one-form $\omega$ on $\widetilde{\Sigma^*}$ such that 
\begin{itemize}
\item[(a)] the meromorphic forms $\omega, f^{2}\omega$ have poles only at the preimages of integer cone points $\{p_{j+1}, \dots, p_{k}\}$; 
\item[(b)] the function $X$ defined in~\eqref{e:Xdef} satisfies the following condition for any non-trivial loop $\gamma$ on $\Sigma_\frakp$:
\begin{equation}\label{condition:eq}
\frac{1}{2}\Re\int^{z_{0}+\gamma}_{z_{0}} (1-f^2,i(1+f^2),2f)\omega = (A_\gamma-1)X(z_{0})
\end{equation}
where $A_{\gamma}$ is the $\operatorname{SO}(3)$ monodromy matrix of $\gamma$.
\item[(c)] the quadratic differential $\sigma:=\omega df$ is single-valued on $\Sigma^*$ and extends to a meromorphic quadratic differential on $\Sigma$ with at most a simple pole at any cone point $p_i$, $i=1,\ldots, k$.  
\end{itemize}
\end{theorem}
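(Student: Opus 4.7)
The plan is to combine Propositions~\ref{p:X1} and~\ref{p:X2} with the classical Weierstrass representation of conformal harmonic maps, reducing the theorem to an analytic translation of the properties $1)$--$4)$ in those propositions into conditions on the one-form $\omega$.

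For the forward direction, I would start from an extra eigenfunction $u$ and form the multivalued map $X$ as in~\eqref{e:X}. Proposition~\ref{p:X1} gives that $X$ is a branched conformal harmonic map to $\mathbb{R}^3$ with Gauss map $\phi$. Since $X$ is conformal and harmonic, its complex differential $\partial X$ is a holomorphic $\mathbb{C}^3$-valued $(1,0)$-form on $\widetilde{\Sigma^*}$; using the explicit formula
\[
\phi_{\bar z} \;=\; \frac{\overline{f'}}{(|f|^{2}+1)^{2}}\bigl(1-f^{2},\, i(1+f^{2}),\, 2f\bigr),
\]
together with the identity $X_{z}=F\phi_{\bar z}$ from~\eqref{e:Xz2}, I would read off a unique meromorphic one-form $\omega$ (with $\omega = 2F\overline{f'}/(|f|^2+1)^2\, dz$ in any local chart) for which $\partial X = \tfrac{1}{2}(1-f^{2}, i(1+f^{2}), 2f)\omega$. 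Integrating and taking the real part recovers~\eqref{e:Xdef}. Condition (b) then follows from property $4)$: single-valuedness of $(X,\phi)$ together with $\phi\circ\gamma = A_\gamma \phi$ and orthogonality of $A_\gamma$ force $X\circ\gamma = A_\gamma X$, and subtracting $X(z_0)$ from both sides yields~\eqref{condition:eq}.

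For the converse, given $\omega$ satisfying (a)--(c), define $X$ on $\widetilde{\Sigma^*}$ by~\eqref{e:Xdef}. The Weierstrass formalism guarantees that $X$ is conformal, harmonic, non-constant (since $\omega\neq 0$), and has $\phi$ as its Gauss map; condition (b) forces the transformation law $X\circ\gamma = A_\gamma X$, so $(X,\phi)$ descends to a single-valued function on $\Sigma_\frakp$. Proposition~\ref{p:X2} then gives $\Delta_g(X,\phi)=2(X,\phi)$, and to apply Definition~\ref{def:extra} it only remains to check that $(X,\phi)$ lies in the Friedrichs domain, i.e.\ extends boundedly across each cone point.

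The main obstacle, and the step requiring genuine work, is the cone-point analysis that establishes the equivalence of property $4)$ with conditions (a) and (c). At an integer cone point $p_i$ with $i\geq j+1$, the map $f$ extends meromorphically and one can repeat the local expansion arguments of~\cite{MR, Nayatani}: the poles of $\omega$ and $f^2\omega$ are allowed but must be arranged so that $\sigma=\omega\, df$ has at worst a simple pole, which is equivalent to $X$ having flat ends, and hence $(X,\phi)$ being bounded. At a non-integer cone point $p_j$ with $j\leq k$, one works in a local disc on $\widetilde{\Sigma^*}$ where $f$ is single-valued meromorphic but the descent to $\Sigma$ involves the rotational monodromy. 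Using the local normal form $g = e^{2u}|dz|^2$ with $u-(\beta_j-1)\log|z|$ continuous, I would expand $\omega$, $f^2\omega$, and $\sigma$ in local coordinates and show that boundedness of $(X,\phi)$ together with the Friedrichs growth condition force $\omega$ and $f^2\omega$ to be holomorphic (i.e.\ no pole, giving (a)) and $\sigma$ to have at most a simple pole (giving (c)), while conversely these two conditions suffice for $(X,\phi)$ to extend continuously across $p_j$ with the required integrability. This local computation, combined with the global Weierstrass correspondence above, closes the equivalence.
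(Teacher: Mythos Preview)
Your overall architecture is the same as the paper's: reduce to Propositions~\ref{p:X1} and~\ref{p:X2}, pass through the Weierstrass representation, and do local expansions at cone points to match boundedness of $(X,\phi)$ with the pole conditions on $\sigma$. The local analysis you sketch at integer and non-integer cone points is also what the paper does, using the normal form $f(z)=z^{\alpha+1}$.

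There is one genuine gap in your converse direction. You write that ``condition (b) forces the transformation law $X\circ\gamma=A_\gamma X$'', but condition~(b) is stated only at the fixed basepoint $z_0$: it says $X(z_0+\gamma)=A_\gamma X(z_0)$. To promote this to $X(z+\gamma)=A_\gamma X(z)$ for \emph{all} $z$, you must first know that the differential $\partial X=\tfrac12(1-f^2,i(1+f^2),2f)\,\omega$ itself has monodromy $A_\gamma$; only then is $X(z+\gamma)-A_\gamma X(z)$ constant in $z$, and (b) identifies that constant as zero. The paper obtains this from condition~(c): the single-valuedness of $\sigma=\omega\,df$ forces $\omega$ and $df$ to have opposite monodromy, and comparing $\partial X$ with $\bar\partial\phi=\dfrac{\overline{df}}{(|f|^2+1)^2}(1-f^2,i(1+f^2),2f)$ (both built from the same $\mathbb{C}^3$-vector) then shows $\partial X$ transforms by $A_\gamma$. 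You have not used (c) at all in this step, so as written your argument does not close.

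A related omission: in the forward direction you discuss the pole orders of $\sigma$ at cone points but never argue that $\sigma$ is single-valued on $\Sigma^*$, which is the first clause of~(c). This does not come from the local cone-point analysis; it follows from the fact that $\sigma$ is the holomorphic part of the second fundamental form of $X$, hence a metric invariant of the pair $(X,\phi)$, both of which transform by the same $A_\gamma$. Equivalently, once you know $X\circ\gamma=A_\gamma X$ and $\phi\circ\gamma=A_\gamma\phi$, the monodromy argument above runs backwards to give $\sigma\circ\gamma=\sigma$. Either way, this step needs to be stated.
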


\begin{proof}
By Proposition~\ref{p:X1} and~\ref{p:X2}, we only need to show that condition (a) - (c) are satisfies if and only if there is a function $X: \widetilde {\Sigma^{*}} \rightarrow \mathbb{R}^{3}$ that satisfies properties 1) - 4) listed in Proposition~\ref{p:X1}. 

We first prove that if there exists such an $\omega$ then  the function $X$ constructed via the Weierstrass representation~\eqref{e:Xdef} satisfies 1) - 4).  Such $X$ is non-constant unless $\omega=0$. Since $f$ and $\omega$ is meromorphic we have $X_{z\bar z}=0$, and the location of poles implies that $X$ is a multivalued conformal harmonic map on $\Sigma_\frakp$. By construction $\tilde \phi$ is the Gauss map of $X$.   Therefore $X$ satisfies conditions $1)-3)$.

Now we check $X$ satisfies condition 4). We need to show that $X$, when viewed as a multivalued function on $\Sigma_\frakp$, has the same monodromy as $\phi$, therefore $(X,\tilde \phi)$ is a well-defined function on $\Sigma_\frakp$.  
Recall that $\sigma$ is a well-defined meromorphic differential on $\Sigma$ (i.e. with trivial monodromy), hence the $(1,0)$-differentials $\omega$ and $df$ have the opposite monodromy. 
%
%Here the monodromy of the meromorphic differentials $\omega$ and $df$ are defined as follows. We take local charts of holomorphic coordinates and write $\omega=w dz$ and $df=f_{z} dz$, Then the monodromy of $w$ and $f_{z}$ are defined on $\Sigma^{*}$ as elements of $PSU(2)$. This definition does not depend on the choice of local charts and only depends on homotopy classes, and in particular, the monodromy of the two differentials being opposite or ``inverse'' is well defined. 
On the other hand, the metric $g$ is well-defined on $\Sigma^{*}$, hence has trivial monodromy. Recall $g$ is defined as the pullback by $f$:
$$
g=\frac{4|df|^{2}}{(|f|^{2}+1)^{2}}.
$$
Therefore $df$ and $\frac{\overline{df}}{(|f|^{2}+1)^{2}}$ have the opposite monodromy, where  
$$\frac{\overline{df}}{(|f|^{2}+1)^{2}}=\frac{1}{(|f|^{2}+1)^{2}}\bar f_{\bar z} d\bar z$$
is a $(0,1)$-differential.
Combining the two statements above, we get that $\omega$ and $\frac{\overline{df}}{(|f|^{2}+1)^{2}}$ have the same monodromy. 
We then consider the monodromy of $X$ and $\phi$. Consider the $\mathbb{R}^3$-valued meromorphic differential $\partial X$ 
$$
\partial X=\frac{1}{4}\omega \left( (1-f^{2}), i (1+f^{2}), 2f\right)
$$
and the $\mathbb{R}^3$-valued $(0,1)$-differential $\bar \partial \phi$
$$
\bar \partial \phi=\frac{\overline{df}}{(|f^{2}|+1)^{2}}\left( (1-f^{2}), i (1+f^{2}), 2f \right).
$$
Comparing the two expressions we have that $\partial X$ and $\bar \partial \phi$ have the same monodromy.
Since the monodromy of $\phi$ belongs to $\operatorname{SO}(3)$, integration implies $\partial X$ has the same monodromy as $\phi$. 
%Recall that the monodromy of $\phi$ in $\operatorname{SO}(3)$ is identified with the monodromy of $f$ in $\operatorname{PSU}(2)$ via the classical identification~\eqref{e:monoid} as described in Remark~\ref{r:mono}. 
At the time, the monodromy of $X$ itself is not necessarily in $\operatorname{SO}(3)$, therefore, it could have a translation component. Let $\gamma$ be the lift of any homotopically non-trivial loop on $\Sigma^*$ to $\widetilde {\Sigma^{*}}$ with base point $z_{0}$.  Then we have
%The multivalued function $X$ lifts to a single valued function $X$ on $\widetilde{\Sigma^*}$, and one has that 
\begin{equation}\label{e:Bgamma}
X(z_{0}+\gamma) = A_\gamma X(z_{0}) + B_\gamma.
\end{equation}
where $A_{\gamma}$ is the monodromy matrix for $\phi$ and $B_{\gamma}\in \mathbb{R}^{3}$.
%Our immediate goal is to translate condition $4)$ into a condition on $\omega$, i.e. to describe conditions that 
To ensure that $B_\gamma=0$ we need to have
$$
 A_\gamma X(z_{0})=X(z_{0}+\gamma) =X(z_{0}) + \frac{1}{2}\Re\int^{z_{0}+\gamma}_{z_{0}} (1-f^2,i(1+f^2),2f)\omega 
$$
which is exactly~\eqref{condition:eq}.

Finally we need to show that $(X,\tilde \phi)$ extends to a bounded function $(X,\phi)$ on $\Sigma$. 
We now show that, if $\sigma$ has at most a simple pole at each cone point $p_i$, then the support function $(X, \phi)$ will be bounded. Away from cone points all functions are bounded, so the support function is bounded. So we restrict to a neighborhood of a cone point $p_{i}$ with angle $2\pi (\alpha+1)$. Since $(X,  \phi)$ is well-defined on $\Sigma_\frakp$, we can take any local representative of $X$ and $\phi$ for the computation. By~\cite{CWWX} it is possible to take a local coordinate such that the developing map is given by $f(z) = z^{\alpha+1}$. Then locally
\begin{equation}\label{e:Xexpansion}
\begin{split}
X&=\frac{1}{2}\Re\int (1-z^{2\alpha+2},i(1+z^{2\alpha+2}),2z^{\alpha+1})\frac{\sigma}{(\alpha+1)z^\alpha dz} \\
&=\frac{1}{2(\alpha+1)}\Re\int(z^{-\alpha} - z^{\alpha+2}, i(z^{-\alpha}+z^{\alpha+2}), 2z)\frac{\sigma}{dz}.
\end{split}
\end{equation}
Since $\lim_{z\rightarrow p_{i}}f(z) = 0$, one has $\lim_{z\rightarrow p_{i}}\phi(z) = (0,0,1)$. Therefore, in order to have a bounded function $u=(X,\phi)$, the last coordinate of $X$ has to be bounded. And this is true if $\sigma$ has a pole of order at most $1$ at $p_i$. Combining everything above we showed that condition (a) - (c) is sufficient for the existence of an extra eigenfunction. 

We now show the other direction of the theorem, that if there is an extra eigenfunction then there exists a meromorphic one-form $\omega$ that satisfies condition (a) - (c). By Proposition~\ref{p:X1} we can construct a multivalued function $X$ on $\Sigma_\frakp$ via~\eqref{e:X} which satisfied condition 1) - 4). Denote $\tilde X: \widetilde{\Sigma_\frakp} \rightarrow \mathbb{R}^{3}$ to be the lift of $X$. Since $X$ (hence $\tilde X$) is conformal harmonic, there exists a Weierstrass representation given by~\eqref{e:Xdef} with $\omega$ being a meromorphic one-form on $\widetilde{\Sigma^*}$. Since $X$ has the same monodromy as $\phi$, it is straightforward that~\eqref{condition:eq} holds. By the same computation as above, $(X, \phi)$ is bounded on $\Sigma$ if and only if $\sigma$ has at most simple poles at any cone point. Together this implies that $\omega$ satisfies condition (a) - (c).
\end{proof}

We remark that condition (b) stated in the theorem must hold for any closed curve $\gamma$ on $\Sigma_\frakp$. In particular, if $A_\gamma=1$, then there is no condition on $X(z_{0})$, but the integral on the left hand side of~\eqref{condition:eq} must be zero.
\begin{proposition}
\label{integerangle:prop}
 If $\gamma$ is the path around a cone point $p_i$ with integer cone angle, then $~\eqref{condition:eq}$ holds if and only if $\Res_{p_i}\frac{\sigma}{df}=0$. 
\end{proposition}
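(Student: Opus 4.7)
The plan is to reduce condition~\eqref{condition:eq} to a purely local residue computation at $p_i$, and then extract the residue statement using condition (c) of Theorem~\ref{th1}.

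First I would observe that since $p_i$ has integer cone angle, the developing map $f$ extends to a single-valued meromorphic function near $p_i$ (as noted just above~\eqref{e:sigmastar}), so a small loop $\gamma$ around $p_i$ has trivial monodromy, $A_\gamma = \mathrm{Id}$. Thus the right-hand side of~\eqref{condition:eq} vanishes, and the condition reduces to
\begin{equation*}
\Re \oint_\gamma (1-f^2,\, i(1+f^2),\, 2f)\,\omega \;=\; 0.
\end{equation*}
Next, invoking the local model from the proof of Theorem~\ref{th1}, I would choose a local coordinate $z$ centered at $p_i$ in which $f(z)=z^m$ with $m=\beta_i\in\mathbb{Z}_{>0}$, and expand $\omega = \sum_k c_k z^k\,dz$. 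A direct residue computation gives $\oint_\gamma f^r\omega = 2\pi i\, c_{-1-rm}$ for $r=0,1,2$, so
\begin{equation*}
\oint_\gamma (1-f^2,\, i(1+f^2),\, 2f)\,\omega \;=\; 2\pi\bigl(i(c_{-1}-c_{-1-2m}),\; -(c_{-1}+c_{-1-2m}),\; 2i\,c_{-1-m}\bigr).
\end{equation*}

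The key step is to invoke condition (c) to eliminate the coefficients $c_{-1-m}$ and $c_{-1-2m}$. Indeed, $\sigma = \omega\,df = m\,c(z)\,z^{m-1}\,dz^2$, and requiring $\sigma$ to have at most a simple pole at $p_i$ forces $c_k=0$ for all $k\le -m-1$; in particular $c_{-1-m} = c_{-1-2m} = 0$. Consequently the integral collapses to $2\pi(i\,c_{-1},\, -c_{-1},\, 0)$, whose real part is $(-2\pi\,\Im c_{-1},\, -2\pi\,\Re c_{-1},\, 0)$. This vanishes precisely when $c_{-1}=0$, i.e.\ $\Res_{p_i}\omega = \Res_{p_i}(\sigma/df) = 0$.

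I do not anticipate a substantive obstacle: once the normalization $f=z^m$ is in place, the argument is a direct contour-integral calculation. The only point requiring care is the bookkeeping of how condition (c) simultaneously annihilates both the $c_{-1-m}$ and $c_{-1-2m}$ terms (so that three a priori independent real conditions collapse to one complex condition on $c_{-1}$), but this follows immediately from the inequality $-1-2m \le -1-m \le -m-1$ in the Laurent expansion of $\sigma$.
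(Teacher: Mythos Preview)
Your proof is correct and follows essentially the same route as the paper: reduce to the local model $f(z)=z^{m}$ near an integer cone point (so $A_\gamma=\mathrm{Id}$ and only the real part of the contour integral must vanish), then use condition~(c) to kill the higher-order residues and isolate $\Res_{p_i}\omega=\Res_{p_i}(\sigma/df)$. The paper's proof does the same computation but phrased through the expansion~\eqref{e:Xexpansion} with $\sigma/dz$ as the common factor rather than the Laurent coefficients of $\omega$; your version is more explicit and arguably clearer about how the three a~priori conditions collapse to the single residue condition.
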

\begin{proof}
We can see that $z^{-\alpha} \sigma/dz$ vanishes in the first two terms in the local expansion of $X$ in~\eqref{e:Xexpansion} and the third term is 0 if and only if $\Res_{p_i}\frac{\sigma}{df}=0$.
\end{proof}

On the other hand, if $A_\gamma \ne 1$, then~\eqref{condition:eq} gives a particular choice of the ``initial value'' $X(z_{0})$.
Moreover, if there are two paths $\gamma\ne\gamma'$ with non-trivial monodromy, then~\eqref{condition:eq} yields a compatibility condition, i.e. there has to be an initial value that works for all such paths simultaneously.

As the first application of the theorem above, we have the following observation:

\begin{corollary}
\label{noextra:cor}
There are no extra eigenfunctions for spherical conical metrics on $\mathbb{S}^{2}$ with at most three conical points.
\end{corollary}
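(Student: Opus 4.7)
The plan is to apply Theorem~\ref{th1} and derive a contradiction from a degree count on the meromorphic quadratic differential $\sigma = \omega\, df$. Suppose $g$ admits an extra eigenfunction; then by Theorem~\ref{th1} there exists a nonzero meromorphic $1$-form $\omega$ on $\widetilde{\Sigma^*}$ satisfying conditions (a)--(c). The first step is to argue that $\sigma$ is actually a nonzero meromorphic quadratic differential on all of $\Sigma = \mathbb{S}^2$. By condition (c), $\sigma$ is single-valued on $\Sigma^*$ and extends meromorphically across the non-integer cone points with at most a simple pole. Condition (a) says $\omega$ has poles only at preimages of the integer cone points, and away from cone points the developing map $f$ is a local diffeomorphism (so $df$ is holomorphic and nonvanishing there); hence $\sigma = \omega\, df$ is holomorphic on $\Sigma$ away from the cone points and has at worst simple poles at each cone point.

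The second step is the degree count. A nonzero meromorphic quadratic differential on $\mathbb{S}^2$ is a meromorphic section of $K_{\mathbb{S}^2}^{\otimes 2}$, so the sum of its orders equals $\deg K_{\mathbb{S}^2}^{\otimes 2} = -4$. Since all poles of $\sigma$ are at the (at most $3$) cone points and each has order at least $-1$, while the orders at all other points are nonnegative, one obtains
\begin{equation*}
-4 \;=\; \sum_{p\in\mathbb{S}^2}\operatorname{ord}_p(\sigma) \;\geq\; -k \;\geq\; -3,
\end{equation*}
a contradiction. Hence $\sigma\equiv 0$, which forces $\omega\equiv 0$ since $df\not\equiv 0$. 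This contradicts the assumption that $\omega$ is nonzero, and so no extra eigenfunction exists.

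The whole argument is essentially an application of Riemann--Roch (or the degree of the bicanonical bundle) plus careful bookkeeping of where $\sigma$ can have poles. The main potential pitfall is step one: one must confirm that, despite $\omega$ and $df$ being only multivalued on $\Sigma^*$, their product $\sigma$ is genuinely a global meromorphic quadratic differential on the compact surface $\mathbb{S}^2$ with no hidden poles. The metric-invariance argument from the discussion preceding Theorem~\ref{th1} (namely that $\sigma$ equals the holomorphic part of the second fundamental form and is thus well-defined on $\Sigma^*$) handles single-valuedness, and then condition (c) gives the extension across the non-integer cone points. Once step one is in place, the degree inequality is immediate and requires no casework on how many cone points are integer versus non-integer.
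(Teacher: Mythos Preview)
Your proof is correct and is essentially the paper's argument, just written out in more detail. The paper's proof is a two-line appeal to Riemann--Roch: a nonzero meromorphic quadratic differential on $\mathbb{S}^2$ with only simple poles must have at least four of them, while condition (c) of Theorem~\ref{th1} allows poles only at the (at most three) cone points.
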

\begin{proof}
The meromorphic quadratic differential $\sigma$ has at most simple poles at cone points. However by Riemann--Roch Theorem, a meromorphic quadratic differential on $\mathbb{S}^{2}$ should have at least four such points with at most simple poles.
\end{proof}
\begin{remark}
From the existing results~\cite{Troyanov, Eremenko, UY}, it was conjectured that any irreducible spherical metrics on $\bS^{2}$ with two or three conical metrics has no eigenfunction with eigenvalue 2. Our result gives a proof of this conjecture.
\end{remark}

In practice, condition~\eqref{condition:eq} is difficult to verify. However, if there are only a few non-integer points, then Proposition~\ref{integerangle:prop} makes it more manageable. Indeed, it is sufficient to check~\eqref{condition:eq} for generators of $\pi_1(\Sigma_\frakp)$, e.g. for simple loops around all but one cone points. There are no conical metrics  on $\bS^{2}$ with a single non-integer point, see e.g.~\cite{Troyanov, MP}. On the other hand, if there are exactly 2 non-integer points and the condition of Proposition~\ref{integerangle:prop} is satisfied for all integer points, then we only need to look for the right choice of $X(z_0)$ for~\eqref{condition:eq} to be satisfied. This allows to explicitly find developing maps admitting extra eigenfunctions.
\begin{remark}
\label{rmk:rescaling}
In the case that there are exactly two non-integer points, we have the following fact. If $f(\cdot): \Sigma^{*}\rightarrow \overline{\mathbb{C}}$ is a developing map that gives a metric with extra eigenfunctions, then for any $\lambda\in \mathbb{C}\setminus \{0\}$, $f(\lambda\cdot)$ and $\lambda f(\cdot)$ are developing maps for metrics with extra eigenfunctions as well. For $f(\lambda\cdot)$, it is just a reparametrization of the same metric, and $\omega(\lambda \cdot)$ would be the corresponding one-form that satisfies all the conditions. For $\lambda f(\cdot)$, the same $\omega$ for $f(\cdot)$ would satisfy all the conditions; however it generates a different metric and different eigenfunctions.  
\end{remark}
\begin{example}
\label{ex1}
Let $\Sigma=\bS^{2}\simeq \overline{\mathbb{C}}$. We would like to find a reducible metric $g$ with two non-integer points $0, \infty$ such that $g$ has extra eigenfunctions. Its developing map $f$ on $\Sigma^*=\mathbb{C}\setminus \{0\}$ is of the form
$$
f(z) = \sqrt{z}\frac{az+b}{cz+d}.
$$
If $g$ admits extra eigenfunctions, then the reducible metrics with developing maps $\lambda f(z)$ and $f(\lambda z)$, $\lambda\in \mathbb{C}\setminus \{0\}$ do as well, thus it is sufficient to set 
$$
f(z) = \sqrt{z}\frac{z+(b+1)}{z+1} = \sqrt{z}\left(1 +\frac{b}{z+1}\right),
$$
where $b\ne 0$.
The corresponding metric $g$ has two non-integer cone points at $0,\infty$ both with cone angle $\pi$. One has
$$
f'(z) = \frac{z^2 - (b-2)z + (b+1)}{\sqrt{z}(z+1)^2},
$$
thus, the metric $g$ also has two integer cone points with angle $4\pi$ at 
$$
z_\pm = \frac{1}{2}(b-2\pm \sqrt{b(b-8)}).
$$
Thus, one can additionally assume $b\ne 8$. Otherwise there are only $3$ cone points and by Corollary~\ref{noextra:cor} the map $f$ does not admit extra eigenfunctions. 

The meromorphic quadratic differential $\sigma$ has a simple pole at each cone point of $g$. There are $4$ such points and by Riemann-Roch, the space of such meromorphic quadratic differentials is one-dimensional, therefore,
$$
\sigma = \frac{\lambda}{z(z^2 -(b-2)z + (b+1))}\,dz^2.
$$
We aim to find $b\ne 0,8$ such that the condition of Proposition~\ref{integerangle:prop} is satisfied, i.e. that $\Res_{z=z_\pm}\frac{\sigma}{df} = 0$. The explicit computation shows that
$$
\Res_{z=z_\pm}\frac{\sigma}{df} = -\frac{\lambda(z_\pm+1)}{4\left(\sqrt{z_\pm}(z_\pm-z_\mp)\right)^3}(b-4)(b\pm \sqrt{b(b-8)}) 
%= -\frac{\lambda(z_\pm+1)^2}{\left(2\sqrt{z_\pm}(z_\pm-z_\mp)\right)^3}(b-4).
$$
Since $z_\pm\ne -1$ for $b\ne 0$ one has $b=4$. Therefore, 
$$
f(z) = \sqrt{z}\frac{z-3}{z+1}
$$
admits extra eigenfunctions.

In terms of geometric configuration, this metric is realized by gluing three footballs, see Figure~\ref{f:3ftb}. Note that this cone angle combination, by \cite{MP}, can only be realized by reducible metrics. 

\begin{figure}[htbp]
\begin{center}
\includegraphics[width=0.5\textwidth]{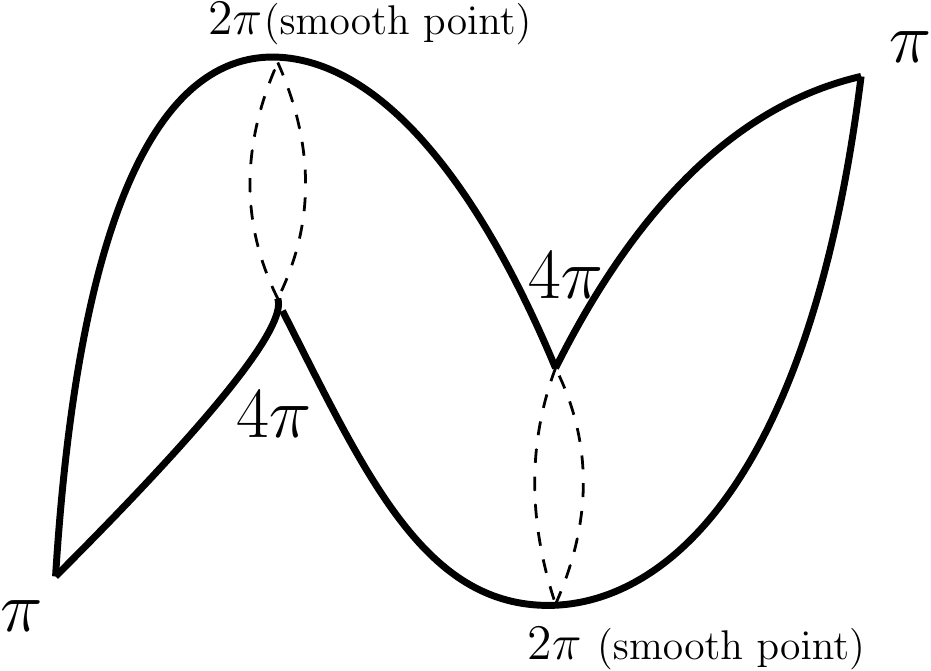}
\caption{The reducible conical metric given by the developing map $f(z) = \sqrt{z}\frac{z-3}{z+1}$. It can be obtained by gluing three spherical footballs of angle $\pi$, and has four conical points with angle $\pi, \pi, 4\pi, 4\pi$.}
\label{f:3ftb}
\end{center}
\end{figure}

\end{example}

Let us make two remarks about this example. First, one could do a similar computation with $f(z) = z^\beta\frac{az+b}{cz+d}$, $0<\beta<1$. Second, one could use the formula $(X,\widetilde \phi)$ to find the explicit formula for the extra eigenfunctions, but that would involve integration by~\eqref{e:Xdef}. We do not do these computations, because the next section contains a different construction which is more suitable for calculations. In particular, the formula for extra eigenfunctions is purely algebraic, and can be obtained below. 

%As the second application, we construct new examples of spherical conical metrics with extra eigenfunctions. To simplify the computation, we restrict to spherical metrics with at most two non-integer points. Note that there are no conical metrics on $\bS^{2}$ with one non-integer points
%based on the result of Mondello--Panov \cite{MP}.
%$$
%d_{1}(\vec \theta-\vec 1, \mathbb{Z}_{odd})\geq 1.
%$$
%If there is only one non-integer point, this is not possible. Assume $\theta_{1}$ is the non-integer angle, then both $|[\theta_{1}]-\theta_{1}|<1$ and $|[\theta_{1}]+1-\theta_{1}|<1$. However, one of the following two points, $([\theta_{1}]-1, \theta_{2}-1, \dots, \theta_{n}-1)$ and $([\theta_{1}], \theta_{2}-1, \dots, \theta_{n}-1)$, is an odd point, which will be distance less than 1 from $\vec\theta-\vec 1$. So the conclusion is that we need to look at points with at least two non-integer points. 
%Notice also that for the two non-integer points, there is no compatibility condition and the only thing one needs to check is condition 1) at integer points (algebraic) and assign initial values by condition 2).  
%
%Here is an example: 
\begin{example}
\label{intro:example}
We show in Section~\ref{examples:sec} that 
for any $\beta>0$, $k>\beta$, $k\in\mathbb{Z}$, the following developing map 
$$
f=z^\beta\frac{(k-\beta)z^k - (k+\beta)}{z^k+1}
$$ 
gives a spherical conical metric which has 2 extra eigenfunctions.

One can check that there are two extra eigenfunctions, given by real and imaginary parts of the function $h_{1}/h_{2}$, where
\begin{multline}
h_{1}=(|z|^{2k} + z^k - \bar z^k -1) \\+|z|^{2\beta}\left((k+\beta)^2 -(k^2-\beta^2)\bar z^k+(k^2-\beta^2)z^k-(k-\beta)^2|z|^{2k}\right)
\end{multline}
and
\begin{multline}
h_{2}=(|z|^{2k} + z^k + \bar z^k +1)\\+|z|^{2\beta}((k+\beta)^2 -(k^2-\beta^2)\bar z^k-(k^2-\beta^2)z^k+(k-\beta)^2|z|^{2k}).
\end{multline}
It follows from a standard lengthy computation, which can be done using a computer, that indeed the above expressions give two extra eigenfunctions.

Finally, we remark that for $\beta\in\mathbb{Z}$ it was previously known that $f$ has extra eigenfunctions, see e.g.~\cite[formula (4.2)]{Ejiri0}. 
%The computations used to obtain this example are lengthy, but elementary. In particular, they can be arranged in a computer algorithm (I am not sure how to express this better :))
%(DO WE WANT TO ADD AN EXPRESSION FOR $\frac{|f|^2-1}{|f|^2+1}$ TO SHOW THAT THESE ARE INDEED "EXTRA"?).

\end{example}

%Take in particular $\beta=1/2$ and $k=1$, then the spherical conical metric on $\bS^{2}$ is given by 
%$$
%\frac{|-3 + 6 z + z^2|^{2}}{|4 z (1 + 11 z - 5 z^2 + z^3)|^2}|dz|^{2}
%$$
%which has two $\pi/2$ points at 0 and $\infty$ and $4\pi$ at the two zeros of the quadratic equation $-3 + 6 z + z^2=0$. 
%Even though it is difficult to directly find the expression of $\omega$ and $X$ in the above example, we would like to connect it to harmonic maps in the next section nevertheless.

%Suppose we require the first three cone points (with angle $\alpha, \alpha, 4\pi$) to be at $0,\infty, 1$, hence by \cite{CWWX} there is a character 1-form associated to it, given by $(\frac{1}{2z} + \frac{1}{z-a} - \frac{1}{z-b})dz$ where $1+a-3b+ab=0$ and the fourth point of angle $4\pi$ is at the zero of $z^{2}+(a-3b)z+ab=0$. By choosing $a,b$ accordingly, the 4th point can be anywhere on the sphere outside $0,1,\infty$.  

\section{Deformations of harmonic maps}\label{s:deform}

In this section we take a different point of view on constructing metrics with extra eigenfunctions. Recall that for maps constructed in Section~\ref{s:criterion} the formula for extra eigenfunctions involves integration. This could cause difficulties if one tries to compute the eigenfunctions explicitly. The advantage of the method described in the present section is that the formula for extra eigenfunctions is purely algebraic. 
In order to explain the main idea let us assume for a moment that all angles are integer, i.e. the developing map $f$ is a branched covering $\Sigma\to\mathbb{S}^2$. 

Let $g_0$ be a fixed metric on $\Sigma$ and $[g_0]$ is the corresponding conformal class of metrics. Recall that a map $\Phi\colon(\Sigma,[g_0])\to\mathbb{S}^n$ is harmonic iff
$$
\Delta_g\Phi = |d\Phi|_g^2\Phi
$$
for any metric $g$ conformal to $g_0$, where $|d\Phi|_g^2 =\sum_{i=1}^{n+1} |d\Phi^i|^2_g$. By conformal covariance of $\Delta$, the components $\Phi^i$ are eigenfunctions of $\Delta_{g_\Phi}$ with eigenvalue $2$, where $g_\Phi = \frac{1}{2}|d\Phi|_g^2g$. The metric $g_\Phi$ has conical singularities of integer angles at points where $d\Phi$ vanishes. It is well-known that all such harmonic maps are (weakly) conformal, i.e. $g_\Phi = \Phi^*g_{\mathbb{S}^n}$. 
\begin{definition}
We will call a smooth map $\Phi\colon\Sigma\to\mathbb{S}^n$ {\em linearly full} if its image linearly spans $\mathbb{R}^{n+1}$ or, equivalently, if the coordinate functions $\Phi^i$ form an $(n+1)$-dimensional space of functions on $\mathbb{S}^2$. 
\end{definition}

We now explain the connection to conical spherical metrics with extra eigenfunctions. Suppose that one has a smooth family $\Phi_t$ of harmonic maps $\Phi_t\colon\Sigma\to\mathbb{S}^n$ such that
\begin{itemize}
\item[1)] For $t\ne 0$ the map $\Phi_t$ is linearly full;
\item[2)] For $t=0$ the map $\phi = \Phi_0$ is a branched covering over an equatorial $\mathbb{S}^2\subset\mathbb{S}^n$. 
\end{itemize}
Then $g_\phi$ is a spherical conical metric with extra eigenfunctions. Indeed, for $t\ne 0$ one has $\ker(\Delta_{g_{\Phi_t}}-2)\geqslant n+1$. Therefore, by continuity of eigenvalues the same is true for $g_\phi$, i.e. $\ker(\Delta_{g_\phi}-2)\geqslant n+1$. 

The construction of such families $\Phi_t$ has been studied, see for example~\cite{Ejiri,Kotani}. Below we describe this construction in detail and adapt it to the setting of multivalued harmonic maps in order to obtain examples of conical metrics with extra eigenfunctions and non-integer angles.

\subsection{Single valued totally isotropic harmonic maps to $\mathbb{S}^{2m}$.} 
\label{TIHSM:sec}
In this section we briefly describe a construction of the particular class of harmonic maps from surfaces to $\mathbb{S}^{2m}$, called {\em totally isotropic}.  Recall that a subspace $L$ of $\mathbb{C}^{2m+1}$  is called {\em isotropic} if $L\perp\bar L$ with respect to the usual Hermitian product.  Equivalently, this means the $\mathbb{C}$-bilinear inner product $(\cdot,\cdot)$ is identically $0$ on $L$.
 \begin{definition}
 \label{TIHM:def}
 A harmonic map $\Phi\colon \Sigma\to \mathbb{S}^{2m}$ is called {\em totally isotropic} if, for any point $p\in M$, the space 
 $$\Psi(p):=\mathrm{span}_{\mathbb{C}}\{\Phi_{\bar z}(p),\Phi_{\bar z \bar z}(p),\ldots,\Phi^{(m)}_{\bar z}(p)\}$$ 
 is isotropic.
 \end{definition} 
\begin{remark}
All harmonic maps $\mathbb{S}^2\to\mathbb{S}^{n}$ are totally isotropic (see~\cite{Calabi}), so this construction completely describes such maps. 
\end{remark}

Totally isotropic maps can be constructed using complex geometry via the so-called {\em twistor correspondence} which can be seen as an extension of the fact that all harmonic maps $\mathbb{S}^2\to\mathbb{S}^2$ are holomorphic. 
\begin{definition}
We define the {\em twistor space} $\mathcal H_m$ to be the space of isotropic $m$-planes in $\mathbb{C}^{2m+1}$. 
\end{definition}
The space $\mathcal H_m$ is a complex submanifold of the complex Grassmanian $Gr_{m,2m+1}(\mathbb{C})$, thus $\mathcal H_m$ is K\"ahler. We then define the {\em twistor projection}
$$\begin{aligned}
\pi_m\colon&\mathcal H_m\to\mathbb{S}^{2m}\\
&L \mapsto (L\oplus\bar L)^\perp
\end{aligned}
$$
for an appropriate choice of orientation on $\mathbb{S}^{2m}$, where $(L\oplus\bar L)^\perp$ is viewed as a real subspace of $\mathbb{C}^{2m+1}$ hence its orthogonal complement is well-defined on $\mathbb{S}^{2m}$. 

The vertical distribution on $\mathcal H_m$ is given by $\ker d\pi_m$, and horizontal distribution is given by $(\ker d\pi_m)^\perp$. The map $\pi_m$ is a Riemannian submersion, i.e. it preserves the length of horizontal vectors. 
\begin{definition}
A smooth map from a manifold $M$ to $\mathcal H_m$ is called {\em horizontal} if the image of the differential is inside the horizontal distribution.
\end{definition}

\begin{proposition}[Twistor correspondence, see e.g.~\cite{Barbosa}]
\label{twistor:prop}
Let $\Sigma$ be a Riemann surface and $\Sigma\colon M\to \mathbb{S}^{2m}$ be a linearly full totally isotropic harmonic map. Then outside of   a collection of isolated points the map $\Psi$ from Definition~\ref{TIHM:def} defines a horizontal  holomorphic map to $\mathcal H_m$. Furthermore, $\Psi$ can be extended across these isolated points to yield a horizontal holomorphic map $\Psi\colon \Sigma\to\mathcal H_m$. Conversely, if $\Psi\colon \Sigma\to\mathcal H_m$ is a holomorphic horizontal map, then $\Phi=\pi_m\circ \Psi$ is a harmonic map to $\mathbb{S}^{2m}$. The map $\Psi$ is called a {\em twistor lift} of $\Phi$.
\end{proposition}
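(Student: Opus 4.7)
The plan is to verify both implications by exploiting three pointwise identities that hold for a totally isotropic harmonic map $\Phi\colon\Sigma\to\mathbb{S}^{2m}$ with $|\Phi|^2=1$: the harmonic-map equation $\Phi_{z\bar z}=-|\Phi_z|^2\Phi$, the isotropy relations $(\Phi^{(i)}_{\bar z},\Phi^{(j)}_{\bar z})=0$ for $1\leqslant i,j\leqslant m$, and the normalization relations $(\Phi^{(k)}_{\bar z},\Phi)=0$ for $1\leqslant k\leqslant m+1$, which follow from $(\Phi,\Phi)=1$ by induction using the isotropy relations.

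For the forward direction, I would first argue that $\dim_{\mathbb{C}}\Psi(p)=m$ off a discrete set $S\subset\Sigma$: the inequality $\dim\Psi\leqslant m$ is automatic since $\Psi$ is isotropic, and the reverse holds at some point by linear fullness, so the exceptional locus, being the vanishing set of the Pl\"ucker section $\Phi_{\bar z}\wedge\cdots\wedge\Phi^{(m)}_{\bar z}$, is discrete. On $\Sigma\setminus S$, holomorphicity of $\Psi$ as a map into $\mathcal{H}_m$ reduces to $\partial_{\bar z}\Phi^{(k)}_{\bar z}=\Phi^{(k+1)}_{\bar z}\in\Psi$ for $k\leqslant m$, which is trivial for $k<m$; for $k=m$, the Hermitian decomposition $\mathbb{C}^{2m+1}=\Psi\oplus\bar\Psi\oplus\mathbb{C}\Phi$ lets one check that the $\mathbb{C}\Phi$- and $\bar\Psi$-components of $\Phi^{(m+1)}_{\bar z}$ both vanish by differentiating the normalization and isotropy identities. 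Horizontality, meaning $\partial_z s\in\Psi\oplus\mathbb{C}\Phi$ for every section $s$ of $\Psi$, follows by applying $\partial_{\bar z}^{\,k-1}$ to the harmonic-map equation to express $\partial_z\Phi^{(k)}_{\bar z}$ as a $\mathbb{C}$-linear combination of $\Phi,\Phi_{\bar z},\ldots,\Phi^{(k-1)}_{\bar z}$, all of which lie in $\Psi\oplus\mathbb{C}\Phi$. Finally, the extension of $\Psi$ across $S$ is automatic: $\mathcal{H}_m$ is a closed subvariety of the Grassmannian, hence projective, and a holomorphic map from a Riemann surface into a projective target extends across isolated points.

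For the converse, assume $\Psi\colon\Sigma\to\mathcal{H}_m$ is holomorphic and horizontal, set $L=\Psi$ and $\Phi=\pi_m(L)$. Harmonicity of $\Phi$ is equivalent to $\Phi_{z\bar z}$ being bilinear-orthogonal to $T^\mathbb{C}_\Phi\mathbb{S}^{2m}=L\oplus\bar L$. Locally pick a holomorphic frame $s_1,\ldots,s_m$ of $L$ as sections of the trivial bundle $\mathbb{C}^{2m+1}$, so $\partial_{\bar z}s_i=0$, and write horizontality as $\partial_z s_i=\alpha_i^j s_j+\beta_i\Phi$. Differentiating the defining relations $(\Phi,s_i)=(\Phi,\bar s_i)=0$ and $(\Phi,\Phi)=1$ gives $\Phi_{\bar z}\in L$ and $\Phi_z\in\bar L$, with explicit coefficients expressible in terms of $\beta_i$. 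The compatibility $\partial_z\partial_{\bar z}s_i=\partial_{\bar z}\partial_z s_i$ forces $\beta_i$ to be holomorphic in $z$; a direct computation then shows $(\Phi_{z\bar z},s_i)=(\Phi_{z\bar z},\bar s_i)=0$, and the normal component is pinned by $(\Phi_{z\bar z},\Phi)=-(\Phi_z,\Phi_{\bar z})=-|\Phi_z|^2$ from differentiating $(\Phi,\Phi)=1$ twice.

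The main obstacle is the bookkeeping in the triple Hermitian decomposition: in the forward direction the crucial step is showing $\Phi^{(m+1)}_{\bar z}\in\Psi$, which is precisely where total isotropy is used in full strength, while in the converse the key point is that the coefficient $\beta_i$ is holomorphic in $z$, a consequence of $\partial_z\partial_{\bar z}=\partial_{\bar z}\partial_z$. The compactness-based extension across $S$ in the forward direction is standard but should be invoked explicitly.
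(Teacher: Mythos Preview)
The paper does not supply its own proof of this proposition; it is quoted as a classical result with a reference to Barbosa, so there is nothing in the paper to compare your argument against.  Your outline is correct and follows the standard route to the twistor correspondence: the forward direction hinges on showing $\Phi^{(m+1)}_{\bar z}\in\Psi$, which you handle correctly via the isotropy and normalization identities, and horizontality follows cleanly from iterated $\bar z$-differentiation of the harmonic-map equation.  One minor simplification is available in the converse.  Instead of extracting the holomorphicity of $\beta_i$ from $\partial_z\partial_{\bar z}s_i=\partial_{\bar z}\partial_z s_i$ and then computing $(\Phi_{z\bar z},s_i)$, you can argue directly: from $(\Phi_{\bar z},s_i)=0$ (differentiate $(\Phi,s_i)=0$ in $\bar z$ and use $\partial_{\bar z}s_i=0$) and $(\Phi_{\bar z},\Phi)=0$ you get $\Phi_{\bar z}\in L$, since $L$ is the $(\cdot,\cdot)$-orthogonal complement of $L\oplus\mathbb{C}\Phi$.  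Writing $\Phi_{\bar z}=\gamma^i s_i$, differentiating in $z$, and using horizontality gives $\Phi_{z\bar z}\in L\oplus\mathbb{C}\Phi$; since $\Phi_{z\bar z}$ is real and $(L\oplus\mathbb{C}\Phi)\cap(\bar L\oplus\mathbb{C}\Phi)=\mathbb{C}\Phi$, one obtains $\Phi_{z\bar z}\in\mathbb{R}\Phi$ immediately, with the coefficient pinned down as you indicate.
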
 
\begin{remark} Let $m=1$. Then by the third equality in~\eqref{e:ortho} any conformal harmonic map $\phi$ to $\mathbb{S}^2$ is totally isotropic. In notation of Section~\ref{s:SphCo} the twistor lift can be identified with the function $f$.
\end{remark}

Let us also recall another object associated with a map $\Phi$ and its twistor lift $\Psi$, called {\em directrix} of $\Phi$.

\begin{definition}
Let $\Phi\colon \Sigma\to \mathbb{S}^{2m}$ be a linearly full totally isotropic harmonic map. Then the {\em directrix} of $\Phi$ is a holomorphic map $\Xi\colon M\to \mathbb{CP}^{2m}$ such that outside of a collection of isolated points 
$$\Psi = \mathrm{span}_{\mathbb{C}}\{\xi,\xi_{z},\ldots,\xi^{(m-1)}_{z}\}$$ 
where $\xi$ is any local lift of $\Xi$ to $\mathbb{C}^{2m+1}$.
\end{definition} 
For a given linearly full totally isotropic harmonic map, an explicit construction of the directrix $\Xi$ is given in~\cite{Barbosa}. Conversely, given a holomorphic map $\Xi\colon \Sigma\to \mathbb{CP}^{2m}$ such that outside of a collection of isolated points the map $\Psi = \mathrm{span}_{\mathbb{C}}\{\xi,\xi_{z},\ldots,\xi^{(m-1)}_{z}\}$ has its image in $\mathcal H_m$, the map $\Psi$ can be extended to a horizontal holomorphic map, 
%\xuwen{is the extension obvious?} 
so that $\Xi$ is a directrix curve of $\Phi = \pi_m\circ\Psi$.  The directrix is often used in order to provide examples of totally isotropic maps, since it is usually easier to construct a map to $\mathbb{CP}^m$ rather than a map to $\mathcal H_m$. We will use this below in Section~\ref{examples:sec}.

We now recall the construction of extra eigenfunctions in the case of linearly full totally isotropic harmonic maps. There is a natural holomorphic action of $\operatorname{SO}(2m+1,\mathbb{C})$ on $\mathcal H_m$ that preserves horizontal distributions. This induces an action on totally isotropic harmonic maps: if $A\in \operatorname{SO}(2m+1,\mathbb{C})$ and $\Phi = \pi_m\circ \Psi$, then $A\Psi$ is a holomorphic horizontal map, so one can define $A\Phi:=\pi_m\circ A\Psi$. One then has the following:

\begin{theorem}[Ejiri~\cite{Ejiri}, Kotani~\cite{Kotani}]
\label{S2deformation:thm}
Let $\Phi\colon (\Sigma,g)\to \mathbb{S}^{2m}$, $m>1$ be a linearly full totally isotropic harmonic map. Then there exists a one-parameter subgroup $A_t\subset \operatorname{SO}(2m+1,\mathbb{C})$ such that $A_t\Phi\to \Phi_\infty$ in $C^\infty$ topology, where $\Phi_\infty\colon (\Sigma,g)\to \mathbb{S}^{2m}$ is totally isotropic, but not linearly full, i.e. its image is contained in a proper totally geodesic subsphere $\mathbb{S}^{2m'}$, $m'<m$. If in addition $m'=1$, then the metric $g_{\Phi_\infty}$ has $2(m-m')$ extra eigenfunctions.
\end{theorem}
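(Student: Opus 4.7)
The plan is to use the twistor correspondence (Proposition~\ref{twistor:prop}) to translate the statement into a question about holomorphic horizontal maps into $\mathcal{H}_m$, where the algebraic action of $\operatorname{SO}(2m+1,\mathbb{C})$ is available. First I would verify that this action on $\mathbb{CP}^{2m}$ lifts to an action on $\mathcal{H}_m$ preserving both its complex structure and its horizontal distribution: this follows because $\operatorname{SO}(2m+1,\mathbb{C})$ preserves the bilinear form defining isotropy and hence the subvariety $\mathcal{H}_m\subset Gr_{m,2m+1}(\mathbb{C})$ together with the tangential conditions cutting out horizontality. Consequently, for any one-parameter subgroup $A_t$ the map $A_t\Phi=\pi_m\circ A_t\Psi$ is again a totally isotropic harmonic map from $(\Sigma,g)$ into $\mathbb{S}^{2m}$, and $g_{A_t\Phi}$ is a spherical conical metric in the same conformal class.

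Next I would construct the degenerating subgroup from the directrix $\Xi\colon\Sigma\to\mathbb{CP}^{2m}$. Pick an element $H$ of a Cartan subalgebra of $\mathfrak{so}(2m+1,\mathbb{C})$ whose eigenvalues on $\mathbb{C}^{2m+1}$ are $(a_1,\ldots,a_m,0,-a_m,\ldots,-a_1)$ with $a_1>a_2>\cdots>a_m>0$, and set $A_t=\exp(tH)$. Writing a local lift $\xi(z)$ of $\Xi$ in a weight basis and dividing $A_t\xi(z)$ by its dominant scalar factor, the normalized limit as $t\to\infty$ projects $\xi(z)$ onto a proper $\operatorname{SO}$-invariant subspace $W\subset\mathbb{C}^{2m+1}$; the induced limiting twistor lift $\Psi_\infty=\operatorname{span}_{\mathbb{C}}\{\xi_\infty,(\xi_\infty)_z,\ldots,(\xi_\infty)^{(m'-1)}_z\}$ takes values in the twistor space $\mathcal{H}_{m'}$ of the sub-sphere $\mathbb{S}^{2m'}\subset\mathbb{S}^{2m}$ cut out by $W$, and $\Phi_\infty=\pi_{m'}\circ\Psi_\infty$ is the desired limit. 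If the first step does not already land in $\mathbb{S}^2$, one iterates the construction inside $\mathbb{S}^{2m'}$ until $m'=1$.

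The main obstacle is establishing smooth convergence and non-degeneracy of the limit. Since $A_t$ acts algebraically on the data entering $\pi_m$, $C^0$ convergence of $A_t\xi/\|A_t\xi\|$ on compact subsets of $\Sigma$ upgrades to $C^\infty$ convergence via Cauchy's integral formula applied to the holomorphic factors, and this transfers through $\pi_m$ to smooth convergence $A_t\Phi\to\Phi_\infty$. To guarantee that $\Phi_\infty$ is non-constant and linearly full inside $\mathbb{S}^{2m'}$, one must choose $H$ generically with respect to $\Xi$, ensuring that the dominant-weight component of $\xi$ is non-vanishing away from isolated points and that the iterated derivatives $\xi^{(k)}_z$ still span the required isotropic plane in $W$; this genericity argument is where the constructions of~\cite{Ejiri, Kotani} enter, and it is the most delicate point to adapt to the multivalued setting.

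Finally, suppose $m'=1$, so that $\Phi_\infty$ has image in a totally geodesic $\mathbb{S}^2\subset\mathbb{S}^{2m}$ and, up to stereographic projection, $\Phi_\infty$ is itself a developing map of $g_{\Phi_\infty}$. For each finite $t$, linear fullness of $A_t\Phi$ means its $2m+1$ coordinate functions in $\mathbb{R}^{2m+1}$ are linearly independent $2$-eigenfunctions of $\Delta_{g_{A_t\Phi}}$, by the conformal covariance recalled before Definition~\ref{def:extra}. Rescaling each coordinate by its dominant factor in $t$ preserves both the eigenfunction equation and linear independence, and after passing to the $C^\infty$ limit yields $2m+1$ linearly independent $2$-eigenfunctions of $\Delta_{g_{\Phi_\infty}}$. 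Exactly three of them span the $\mathbb{R}^3$-coordinates of $\Phi_\infty$ and are the trivial eigenfunctions coming from the developing map; the remaining $2(m-m')=2m-2$ are linearly independent modulo this three-dimensional space, and thus constitute extra eigenfunctions in the sense of Definition~\ref{def:extra}.
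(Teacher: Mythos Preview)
Your overall framework---pass to the twistor lift $\Psi\colon\Sigma\to\mH_m$, act by a one-parameter subgroup of $\operatorname{SO}(2m+1,\mathbb{C})$, and take a limit---is the same as the paper's. The gap is in your choice of $A_t$. You take $H$ regular, so $A_t=\exp(tH)$ has $2m+1$ distinct eigenvalues on $\mathbb{C}^{2m+1}$. The induced flow on $\mH_m$ then has only \emph{isolated} fixed points (the $2^m$ coordinate isotropic $m$-planes $\mathrm{span}\{E_1^{\pm},\ldots,E_m^{\pm}\}$), and for generic $L\in\mH_m$ one has $A_tL\to\mathrm{span}\{E_1,\ldots,E_m\}$ as $t\to\infty$. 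Applied to $\Psi$, this forces $\Psi_\infty$ to be the \emph{constant} map and $\Phi_\infty$ a single point; the limit is degenerate, not a harmonic map into a lower subsphere. Phrased in your language: dividing $A_t\xi(z)$ by its dominant factor $e^{a_1t}$ lands the limit in the one-dimensional $a_1$-eigenspace, so the limiting directrix is constant. (Incidentally, the standard representation of $\operatorname{SO}(2m+1,\mathbb{C})$ is irreducible, so there is no proper ``$\operatorname{SO}$-invariant subspace $W$'' as you write; presumably you meant $A_t$-invariant.)

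The paper avoids this by taking $A_t$ highly non-regular: $A_tE_1=e^tE_1$, $A_t\bar E_1=e^{-t}\bar E_1$, and $A_t=\mathrm{id}$ on $\mathrm{span}\{E_1,\bar E_1\}^\perp$. Now the attracting set $\mF_+=\{E_1\oplus L':L'\in\mH_{m-1}\}$ is a positive-dimensional copy of $\mH_{m-1}$, with $\pi_m(\mF_+)\subset\mathbb{S}^{2(m-1)}$, so a non-constant limit is possible. What you call the ``genericity argument'' and defer to the references is then made concrete by a short dimension count: the set of isotropic lines contained in some $\Psi(x)$, $x\in\Sigma$, is a variety of complex dimension at most $1+(m-1)=m$, while the quadric of all isotropic lines in $\mathbb{C}^{2m+1}$ has dimension $2m-1>m$; hence one can choose the real basis so that the isotropic line $\mathbb{C}E_1$ lies in no $\Psi(x)$, i.e.\ $\Psi(\Sigma)$ misses the relevant $\mF_\pm$ and $A_t\Psi$ converges to a non-constant horizontal holomorphic map into $\mF_\mp\cong\mH_{m-1}$.

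For the extra eigenfunctions when $m'=1$, the paper argues by continuity of eigenvalues: for finite $t$ the $2m+1$ coordinates of $A_t\Phi$ are independent, so $\dim\ker(\Delta_{g_{A_t\Phi}}-2)\geq 2m+1$, and this lower bound survives the limit. Your alternative---rescale each coordinate by its dominant $t$-factor and pass to the limit---can be made to work and is closer in spirit to the explicit formulae~\eqref{specialbasis:eq},~\eqref{e:eigenfunctions} the paper derives afterwards, but the assertion that the rescaled limits remain linearly independent modulo $\mathrm{span}\{\Phi_\infty^1,\Phi_\infty^2,\Phi_\infty^3\}$ is not automatic and needs its own argument.
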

We provide the sketch of a proof, similar consideration are used later in the context of conical metrics with non-integer angles.
\begin{proof}[Sketch of the proof]
Let $\{e_0,e_1,\ldots, e_{2m}\}\subset\mathbb{R}^{2m+1}\subset\mathbb{C}^{2m+1}$ be a real orthonormal basis. We then define $E_j = \frac{1}{\sqrt{2}}(e_{2j-1} - ie_{2j})$, $j=1,\ldots, m$. Then the basis $\{e_0, E_1,\bar E_1,\ldots E_m, \bar E_m\}$ satisfies 
$$(E_j,E_k) = 0, \ (E_j,\bar E_k) = \delta_{jk}, \ (e_0, E_j) = (e_0, \bar E_j) = 0$$
where $(\cdot,\cdot)$   is the $\mathbb{C}$-bilinear inner product. 

Let $A_t\in \operatorname{SO}(2m+1,\mathbb{C})$ be a family of orthogonal transformations given by 
$$A_tE_1 = e^t E_1, \ A_t\bar E_1 = e^{-t}\bar E_1, \ A_t= id \text{ on } \mathrm{span}_{\mathbb{C}}\{E_1,\bar E_1\}^\perp.
$$ 
Recall that $A_{t}$ acts on $\mH_m$. 
One can show that the fixed point set of $A_t$ on $\mH_m$ consists of two components 
$$\mF_+ = \{E_1\oplus L'\}, \ \mF_-=\{\bar E_1\oplus L'\},$$ 
where $L'$ is an $(m-1)$-dimensional isotropic subspace in $\mathrm{span}_{\mathbb{C}}\{E_1,\bar E_1\}^\perp$. The space of all such $L'$ can be identified with $\mH_{m-1}$ and the twistor projection $\pi_m$ sends these fixed point sets to the subsphere $\mathbb{S}^{2(m-1)} = \mathbb{S}^{2m}\cap \mathrm{span}_{\mathbb{R}}\{e_1,e_2\}^\perp$. Furthermore, for any $L\in \mH_m\setminus \mF_-$ one has $A_t L\to L_{+\infty}\in \mF_+$ as $t\to +\infty$; and vice versa, for any $L\in \mH_m\setminus \mF_+$ one has $A_t L\to L_{-\infty}\in \mF_-$ as $t\to -\infty$. These facts can be proved either by a direct computation or by noticing that $A_t$ is the gradient flow of a Morse--Bott function (see e.g.~\cite{GuestOhnita}).

Given a linearly full totally isotropic harmonic map $\Phi\colon (\Sigma,g)\to\mathbb{S}^{2m}$ with the twistor lift $\Psi$, we assume that $\Psi(\Sigma)\subset \mH_m\setminus \mF_+$ or $\Psi(M) \subset \mH_{m}\setminus \mF_-$. Then $A_t\Psi$ as $t\to+\infty$ or $t\to-\infty$ respectively converges to a holomorphic horizontal map $\Psi_{\pm\infty}\subset \mF_{\pm}$, such that $\Phi_\infty = \pi_m\circ\Psi_\infty$ is not linearly full.

Notice that the sets $\mF_+$, $\mF_-$ depend on the choice of the basis $\{e_{i}\}_{i=0}^{2m}$. We show that for any linearly full totally isotropic harmonic map $\Phi$, one can choose the basis, so that $\Psi(\Sigma)$ avoids $\mF_+$. This is achieved by dimension counting. The image $\Psi(\Sigma)$ intersects $\mF_+$ iff there is a point $x\in M$ such that the line $\mathbb{C}(e_1+ie_2)$ is a subspace in $\Psi(x)$. Let $\mA = \{l\in \mathbb{CP}^{2m},\,l\subset \Psi(x)|\,x\in \Sigma \}$ be a set of isotropic lines contained in $\Psi(x)$. It is a complex variety of (complex) dimension at most $m$. At the same time, any isotropic line is spanned by a vector of the form $f_1+i f_2$, where $f_1\perp f_2$, $\|f_1\|=\|f_2\|=1$ are real vectors. Therefore $f_1,f_2$ can be chosen to be a part of a real orthonormal basis. The set of all isotropic lines is a smooth quadric in $\mathbb{CP}^{2m}$, i.e. it has (complex) dimension $2m-1$. Since $2m-1>m$ for $m>1$ the proof is complete.
\end{proof}

Theorem~\ref{S2deformation:thm} allows one to construct spherical conical metrics $g_{\Phi_\infty}$ with integer angles and arbitrary many extra eigenfunctions~\cite{Ejiri}. The argument in~\cite{Ejiri} is inductive. Starting with a linearly full map $\Phi\colon(\Sigma,g)\to\mathbb{S}^{2m}$ one first applies Theorem~\ref{S2deformation:thm} to obtain a linearly full map $\Phi_\infty\colon (\Sigma,g)\to\mathbb{S}^{2m'}$, $m'<m$, with extra eigenfunctions. Then one applies Theorem~\ref{S2deformation:thm} to $\Phi_\infty$ again and again until one arrives at a map to $\mathbb{S}^2$. In this approach it is necessary to track how the action of $\operatorname{SO}(2m+1,\mathbb{C})$ affects the extra eigenfunctions to show that they do not completely disappear after each step.

In fact, it is possible to simplify this argument by completely avoiding the latter issue. It is sufficient to construct a one-parameter subgroup $A_t$ in the proof of Theorem~\ref{S2deformation:thm} in a way that ensures $m'=1$. We sketch the argument below. Let $\{e_0, E_1,\ldots, E_m\}$ be the vectors given above. Let $A_t$ be given by 
$$
\begin{aligned}
&A_tE_j = e^{-t} E_j, \ A_t\bar E_j = e^{t}\bar E_j, \ j=1,\ldots, m-1\\
&A_t=id \text{ on } \mathrm{span}_{\mathbb{C}}\{e_0,E_m,\bar E_m\}.
\end{aligned}
$$ 
Denote $L_0' = \mathrm{span}_\mathbb{C}\{E_1,\ldots,E_{m-1}\}$ as the $e^{-t}$-eigenspace of $A_t$. Then one observes that if $L\in\mH_m$ satisfies $L\cap L_0'=\{0\}$, then as $t\to +\infty$
$$A_t L\to \bar L'_0\oplus l$$ 
where $l$ is an isotropic line in $\mathrm{span}_{\mathbb{C}}\{e_0,E_m,\bar E_m\}$. The twistor projection $\pi_m$ sends such planes into $\mathbb{S}^2 = \mathrm{span}_{\mathbb{R}}\{e_0,e_{2m+1},e_{2m}\}\cap \mathbb{S}^{2m}$. Thus, it is sufficient to choose the vectors $E_i$ such that $\forall p\in M,\, \Psi(p)\cap L_0' = \{0\}$. This can be achieved by dimension counting similar to the proof of Theorem~\ref{S2deformation:thm}. The space of $(m-1)$-dimensional isotropic planes is a complex manifold of (complex) dimension $\frac{m^2+3m-4}{2}$. At the same time, 
\begin{multline*}
\mA = \{L'|\,L'\cap \Psi(p)\ne \{0\},\,p\in \Sigma,\\ \text{$L'$ is an $(m-1)$-dimensional isotropic plane}\}
\end{multline*}
 is a variety of dimension $1+(m-1) + \frac{(m-1)^2 +3(m-1)-4}{2} = \frac{m^2+3m-6}{2}<\frac{m^2+3m-4}{2}$, which completes the argument. 

Let us discuss the extra eigenfunctions of such maps. 
Given a linearly full totally isotropic map $\Phi\colon(\Sigma,g)\to\mathbb{S}^{2m}$, $m>1$ and its twistor lift $\Psi\colon \Sigma\to\mH_m$,  let $L'$ be an isotropic $(m-1)$-plane such that $\forall p\in M$ one has $\Psi(p)\cap L'=\{0\}$. Choose a basis $E_1,\ldots E_{m-1}$ of $L'$ such that $(E_k,\bar E_j) = \delta_{kj}$. Let $e_1,e_2,e_3$ be a real orthonormal basis of $V=(L'\oplus \bar L')^\perp$, Then by the result of~\cite{Kotani} one can choose a basis of $\Psi(z)$ of the form
\begin{equation}
\label{specialbasis:eq}
\begin{split}
&F_j(z) = \bar E_j + G_j(z) + u_j(z),\,j=1,\ldots, m-1;\\
&F_m(z) = w(z) + u_m(z),
\end{split}
\end{equation}
where $w,G_j\colon \Sigma\to V$, $u_j\colon \Sigma\to L'$ are meromorphic. Then $\mathbb{C}w$ is a twistor lift of $\Phi_\infty$. That is, if we write $w(z) =\sum_{i=1}^{3} w^i(z)e_i$, then the developing map $f(z)$ (which is $\Phi_\infty$ viewed as a map to $\overline{\mathbb{C}}$) can be found using the formula 
\begin{equation}\label{e:develop}
f(z) = \frac{w^3(z)}{w^1(z)-iw^2(z)}.
\end{equation}
Finally, the functions 
\begin{equation}\label{e:eigenfunctions}
h_j(z) = (G_j(z),\Phi_\infty)
\end{equation}
are the complex valued extra eigenfunctions. 

\begin{example} 
Let $\Sigma = \mathbb{S}^2$ viewed as $\overline {\mathbb{C}}$. Then it is easy to check that the curve $\Xi\colon\mathbb{S}^2\to\mathbb{CP}^{2m}$ with the lift $\xi(z)$ given by
\begin{equation*}
\xi(z) = \frac{1}{6}\bar E_1 - \frac{2}{3}z\bar E_2 + z^2e_0 + z^{3} E_2 + z^{4} E_1
\end{equation*}
is a directrix curve.
Thus $\Psi(z)=\mathrm{span}\{\xi(z),\xi'(z)\}$ is a holomorphic horizontal map to $\mH_2$, where
\begin{equation*}
\begin{cases}
&\xi(z) =\frac{1}{6}\bar E_1 - \frac{2}{3}z\bar E_2 + z^2e_0 + z^{3} E_2 + z^{4} E_1;\\
&\xi'(z) = -\frac{2}{3}\bar E_2 + 2z e_0 + 3z^{2} E_2 + 4z^{3} E_1.
\end{cases}
\end{equation*}
One has $\Psi(0) = \bar E_1\oplus \bar E_2$ and $\Psi(\infty) = \overline{\Psi(0)}$, therefore, one can not choose $L'$ to be spanned by any of the elements of the basis. Thus, it is convenient to do a change of basis, e.g.
\begin{equation*}
\begin{cases}
&\bar E_1 = \frac{1}{\sqrt{2}}e'_0 -\frac{1}{2}(E_1' - \bar E_1'),\\
& E_1 = \frac{1}{\sqrt{2}}e'_0 + \frac{1}{2}(E_1' - \bar E_1'),\\
&e_0 = \frac{1}{\sqrt{2}}(E_1' + \bar E_1'),
\end{cases}
\end{equation*}
and pick $L' = \mathbb{C}E_1$. Using the formulae~\eqref{specialbasis:eq},~\eqref{e:develop},~\eqref{e:eigenfunctions} together one obtains that
$$
f(z) = z\frac{z^2 + \frac{1}{2}}{z^2 - \frac{3}{\sqrt{2}}}
$$
is the developing map of $\Phi_\infty$. Up to transformations in Remark~\ref{rmk:rescaling} we recover the case $\beta =1$, $k=2$ of Example~\ref{intro:example}.
\end{example}

In Example~\ref{ex:m=2} below we generalize this approach to the case of metrics with non-integer angles.

\subsection{Multivalued totally isotropic harmonic maps to $\mathbb{S}^{2m}$} 
\label{CTIHMM:sec}
As before, let $\Sigma^* = \Sigma\setminus \{p_1,\ldots, p_k\}$ and let $\widetilde{\Sigma^{*}}$ be its universal cover.
\begin{definition}
\label{CTIHMM:def}
A multivalued map $\Phi\colon\Sigma^*\to\mathbb{S}^{2m}$ is called {\em conical totally isotropic harmonic} (or {\em CTIHMM}) if 
\begin{itemize}
\item[1)] the lift $\widetilde\Phi\colon\widetilde{\Sigma^{*}}\to\mathbb{S}^{2m}$ is a totally isotropic harmonic map;
\item[2)] the monodromy of $\Phi$ is contained in $\operatorname{SO}(2m+1,\mathbb{R})$; 
\item[3)] the metric $g_\Phi$ extends to a metric with conical singularities on $\Sigma$. That is, near any $p_j$ one has a local expression 
$$|d\Phi|_g^2(z) = |\rho(z)|^2 |z|^{2(\alpha_j-1)}$$ 
where $z$ is a coordinate centered at $p_j$, $\rho(z)\ne 0$ and $\alpha_j>0$.
\end{itemize}
\end{definition}
\begin{remark}
The energy density $|d\Phi|_g^2$ is a well-defined function on $\Sigma^*$ by item 2), therefore it is valid to discuss its local expressions near any punctures.
\end{remark}

Since the proof of Proposition~\ref{twistor:prop} is purely local, any CTIHMM $\Phi\colon\Sigma^*\to \mathbb{S}^{2m}$ defines a horizontal holomorphic multivalued twistor lift $\Psi\colon\Sigma^*\to\mathcal H_m$ with the same monodromy representation. The latter can be seen directly from the explicit formula for $\Psi$ given in Definition~\ref{TIHM:def}.

We are now ready to study the action of $\operatorname{SO}(2m+1,\mathbb{C})$ on CTIHMMs. Let $\Phi$ be a linearly full CTIHMM and $\Psi$ be the corresponding twistor lift. If $A\in \operatorname{SO}(2m+1,\mathbb{C})$ and $M$ is the monodromy group of $\Psi$, then the monodromy group of $A\Psi$ is given by $AMA^{-1}$. In particular, we already see the difference with the situation described in Section~\ref{TIHSM:sec}, namely, the twistor projection $\pi_m\circ A\Psi$ is {\em not} necessarily a CTIHMM. The necessary condition is that $AMA^{-1}\subset \operatorname{SO}(2m+1,\mathbb{R})\subset \operatorname{SO}(2m+1,\mathbb{C})$. It turns out that this condition is also sufficient.

\begin{proposition}
Let $\Phi$ be a linearly full CTIHMM with monodromy group $M$ and $\Psi$ be the corresponding twistor lift. Then for any $A\in \operatorname{SO}(2m+1,\mathbb{C})$ satisfying $AMA^{-1}\subset \operatorname{SO}(2m+1,\mathbb{R})\subset \operatorname{SO}(2m+1,\mathbb{C})$, the map 
$\pi_m\circ A\Psi$ is a linearly full CTIHMM with the same conical singularities.
\end{proposition}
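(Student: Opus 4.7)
The plan is to verify the three conditions in Definition~\ref{CTIHMM:def} for the map $\pi_m\circ A\Psi$, together with its linear fullness. Condition (1) is the most formal: since $\operatorname{SO}(2m+1,\mathbb{C})$ acts on $\mathcal{H}_m$ by holomorphic diffeomorphisms preserving the horizontal distribution, the multivalued map $A\Psi$ is again horizontal holomorphic on $\Sigma^*$, and its lift to $\widetilde{\Sigma^*}$ is a single-valued holomorphic horizontal map. Proposition~\ref{twistor:prop} then identifies the lift of $\pi_m\circ A\Psi$ as a totally isotropic harmonic map into $\mathbb{S}^{2m}$.

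The hypothesis $AMA^{-1}\subset \operatorname{SO}(2m+1,\mathbb{R})$ enters only in condition (2). For any deck transformation $\gamma$ with $\Psi$-monodromy $M_\gamma$, one has
$$A\Psi(\tilde z\cdot\gamma) = AM_\gamma\Psi(\tilde z) = (AM_\gamma A^{-1})\cdot A\Psi(\tilde z),$$
so $A\Psi$ has monodromy $AMA^{-1}$. Since elements of $\operatorname{SO}(2m+1,\mathbb{R})$ commute with complex conjugation, they act on $\mathcal{H}_m$ compatibly with the twistor projection, and hence the monodromy of $\pi_m\circ A\Psi$ on $\mathbb{S}^{2m}$ coincides with $AMA^{-1}\subset \operatorname{SO}(2m+1,\mathbb{R})$ as required.

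For condition (3), the key observation is that $A$ is a biholomorphism, so $dA$ is invertible at every point and maps the horizontal subspace at $\Psi(\tilde z)$ isomorphically onto the horizontal subspace at $A\Psi(\tilde z)$. Combined with the fact that $d\pi_m$ is an $\mathbb{R}$-linear isomorphism on horizontal vectors, this shows that $d(\pi_m\circ A\Psi)$ has the same pointwise rank and vanishing order as $d\Phi$. In a local coordinate $z$ near each $p_j$ this yields an expansion
$$|d(\pi_m\circ A\Psi)|_g^2=|\tilde\rho(z)|^2|z|^{2(\alpha_j-1)},\qquad \tilde\rho(p_j)\neq 0,$$
so the cone angles $2\pi\alpha_j$ are preserved and no new singularities appear on $\Sigma^*$. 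Finally, linear fullness is transferred through the directrix: $A$ descends to a projective automorphism of $\mathbb{CP}^{2m}$, and $A\Xi$ is the directrix of $\pi_m\circ A\Psi$. Since projective automorphisms preserve linear nondegeneracy and $\Xi$ is nondegenerate (as $\Phi$ is linearly full), so is $A\Xi$, giving linear fullness of $\pi_m\circ A\Psi$. The main obstacle is the careful pointwise comparison of vanishing orders in condition (3); the remaining steps are formal consequences of the twistor machinery once the monodromy hypothesis is invoked.
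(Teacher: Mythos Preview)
Your verification of conditions (1), (2), and linear fullness is fine and matches what the paper treats as routine. The issue is in your argument for condition (3).

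You write that $dA$ is invertible at every point and conclude that $d(\pi_m\circ A\Psi)$ has ``the same pointwise rank and vanishing order as $d\Phi$,'' from which you immediately deduce the local expansion near each $p_j$. But the punctures $p_j$ are not points of $\Sigma^*$, so ``vanishing order at $p_j$'' is not directly defined---especially for non-integer $\alpha_j$, where $\Phi$ does not even extend across $p_j$. What you actually need is a \emph{uniform} comparison: the ratio $|d(A\Phi)|_g^2/|d\Phi|_g^2$ must be bounded above and below on a punctured neighborhood of each $p_j$. Pointwise invertibility of $dA$ alone does not give this, since as $z\to p_j$ the image $\Psi(z)$ may range over all of $\mathcal{H}_m$, and you have not controlled the operator norm of $dA$ along the way. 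You also use only that $d\pi_m$ is an isomorphism on horizontal vectors; to compare the \emph{norms} $|d\Phi|_g^2$ and $|d(A\Phi)|_g^2$ you need that $\pi_m$ is a Riemannian submersion, i.e.\ an isometry on horizontal vectors.

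The paper's proof handles exactly this point. It identifies $T^{(1,0)}_L\mathcal{H}_m$ with a subspace of $\mathrm{hom}_{\mathbb{C}}(L,L^\perp)$ equipped with the Hilbert--Schmidt norm, writes $A_*X = \pi^\perp_{AL}AXA^{-1}$, and reads off the global bound $\|A_*X\|\leq (m+1)\,\|A\|\,\|A^{-1}\|\,\|X\|$, independent of the base point $L$. Together with the Riemannian submersion property of $\pi_m$, this yields a uniform quasi-isometry $c^{-1}\leq |d\Phi|_g^2/|d(A\Phi)|_g^2\leq c$ on all of $\Sigma^*$, and the conical expansion at each $p_j$ follows. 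An equivalent repair of your argument is to invoke compactness of $\mathcal{H}_m$ to bound $\|dA\|$ and $\|(dA)^{-1}\|$ uniformly; but as written, the step from pointwise invertibility to the asymptotic expansion near the punctures is a genuine gap---and, as you yourself note at the end, this is precisely the ``main obstacle.''
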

\begin{proof}
One needs only to check that the item $3)$ of Definition~\ref{CTIHMM:def} is satisfied for $A\Psi$. To do that we recall the following facts for $\mH_m$.

Since $\mH_m\subset Gr_{m,2m+1}(\mathbb{C})$ is a complex submanifold of a K\"ahler manifold, a metric on $\mH_m$  is given by the restriction of the metric on $Gr_{m,2m+1}(\mathbb{C})$ and can be described in the following way. The holomorphic tangent space $T^{(1,0)}_L\mH_m$ can be identified with a subspace of $\mathrm{hom}_{\mathbb{C}}(L,L^\perp)$. The metric on this space is the usual $(X,Y):=\mathrm{tr}(X^*Y)$, so that the length of a tangent vector coincides with the Hilbert-Schmidt norm of the corresponding operator.

Let $X\in T^{(1,0)}_L\mH_m\subset \mathrm{hom}_{\mathbb{C}}(L,L^\perp)$, $A\in \operatorname{SO}(2m+1,\mathbb{C})$. We denote by $A_*$ the differential of the action of $A$ on $\mathcal H_m$, i.e. $A_*X\in T^{(1,0)}_{AL}\mH_m\subset \mathrm{hom}_{\mathbb{C}}(AL,(AL)^\perp)$. Then one has that $A_*X = \pi^\perp_{AL}AXA^{-1}$, where $\pi^\perp_{AL}$ is the projection onto $(AL)^\perp$ (see e.g.~\cite[Section 4.6]{KarpukhinRP2}). Since the norm of a projection equals the dimension of the image, one has  $\|A_*X\|\leqslant c\|X\|$, where $c = (m+1)\|A\|\|A^{-1}\|$ is a constant independent of $X$. Note that $(A^{-1})_* = (A_*)^{-1}$, thus, interchanging the roles of $X$ and $A_*X$ one has $\|X\|\leqslant c\|A_*X\|$.

Applying these observations to the tangent vectors in $\Psi_*(T^{(1,0)}\widetilde{\Sigma^{*}})$ and using the fact that $\pi_m$ is a Riemannian submersion (i.e. it preserves the lengths of horizontal vectors), one has that $g_{A\Phi}$ is quasi-isometric to $g_{\Phi}$, i.e. 
$$
\frac{1}{c}\leqslant\frac{\|d\Phi\|_g^2}{\|d(A\Phi)\|_g^2}\leqslant c.
$$
As a result, around every cone point the local expression for $\|d(A\Phi)\|_g^2$ is the same as for $\|d\Phi\|_g^2$, i.e. $A\Phi$ is a CTIHMM.

\end{proof}

In the present paper we investigate the simplest situation with nontrivial monodromy group. Consider the decomposition $\mathbb{C}^2\oplus\mathbb{C}^{2m-1}$. In the following we work with CTIHMM $\Phi\colon \Sigma^*\to\mathbb{S}^{2m}$ such that $M\subset \operatorname{SO}(2,\mathbb{R})$ acting on the first two components. We will act on such $\Phi$ by the group $\operatorname{SO}(2m-1,\mathbb{C})$ preserving the first two coordinates. It is easy to see that the action of $\operatorname{SO}(2m-1,\mathbb{C})$ preserves the monodromy representation. We would like to use an analogue of Theorem~\ref{S2deformation:thm} adapted to the present context in order to construct conical metrics with arbitrary many eigenfunctions. In particular, given a CTIHMM $\Phi\colon\Sigma^*\to\mathbb{S}^{2m}$ with monodromy described above, we would like to find $A_t\subset \operatorname{SO}(2m-1,\mathbb{C})$ such that $A_t\Phi\to \Phi_\infty$ as $t\to\infty$, where $\Phi_\infty$ is a map to $\mathbb{S}^2\subset\mathbb{S}^{2m}$. This way $g_{\Phi_\infty}$ is a reducible conical spherical metric with $2(m-1)$ extra eigenfunctions where the developing map is given similarly as~\eqref{e:develop}. Unfortunately, we could not prove the analogue of Theorem~\ref{S2deformation:thm} in full generality, so we prove it under an additional assumption.

In order to formulate the assumption, let us discuss the geometry of the multivalued twistor lift $\Psi\colon \Sigma^*\to \mH_m$. Note that the intersection $\Psi^*(p):=\Psi(p)\cap \mathbb{C}^{2m-1}$ is preserved under the action of the monodromy group, i.e. $\Psi^*(p)$ is a well defined isotropic subspace of $\mathbb{C}^{2m-1}$ for any $p\in\Sigma^*$.
Furthermore, $\dim_{\mathbb{C}}\Psi^*(p)$ is $m-2$, $m-1$ or $m$. Since $(2m-1)$-dimensional space can not contain $m$-dimensional isotropic subspace, the latter is impossible. 

\begin{theorem}
\label{CS2deformation:thm}
Let $\Phi\colon \Sigma^*\to \mathbb{S}^{2m}$, $m>1$ be a linearly full CTIHMM with non-trivial monodromy group $M\subset \operatorname{SO}(2,\mathbb{R})$. Assume that the map $\Psi^*$ continuously extends to points $\{p_j\}$ in the sense that for each $j$ there is an isotropic space $L_j'$ of dimension $(m-1)$ or $(m-2)$ such that $\lim_{p\to p_j}\Psi^*(p)\subset L'_j$. 
Then there exists a one-parameter subgroup $A_t\subset \operatorname{SO}(2m-1,\mathbb{C})$ such that $A_t\Phi\to \Phi_\infty$ in 
%\xuwen{what topology, $C_{0}^{\infty}(\Sigma^{*})?$}  
$C_{0}^{\infty}(\Sigma^{*})$-topology, where $\Phi_\infty\colon \Sigma^*\to \mathbb{S}^{2m}$ is a CTIHMM with its image in a proper totally geodesic subsphere $\mathbb{S}^{2}$. In particular, the metric $g_{\Phi_\infty}$ is a reducible conical spherical metric with the same monodromy representation as $\Phi$ and has $2(m-1)$ extra eigenfunctions.
\end{theorem}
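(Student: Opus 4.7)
The plan is to follow the blueprint of the proof of Theorem~\ref{S2deformation:thm} from Section~\ref{TIHSM:sec}, with a one-parameter subgroup $A_t$ that commutes with the monodromy action. Since $M\subset \operatorname{SO}(2,\mathbb{R})$ acts only on the first factor of $\mathbb{C}^{2m+1}=\mathbb{C}^2\oplus \mathbb{C}^{2m-1}$, I would take $A_t\in \operatorname{SO}(2m-1,\mathbb{C})$ acting trivially on $\mathbb{C}^2$. Explicitly, pick a real unit vector $e_0\in \mathbb{R}^{2m-1}$, an isotropic $(m-1)$-plane $L_0'\subset \mathbb{C}^{2m-1}$ orthogonal to $e_0$, and a basis $E_1',\ldots,E_{m-1}'$ of $L_0'$ satisfying $(E_j',\bar E_k')=\delta_{jk}$; then define $A_t$ to be the identity on $\mathbb{C}^2\oplus\mathbb{C}e_0$, to act by $E_j'\mapsto e^{-t}E_j'$ on $L_0'$, and by $\bar E_j'\mapsto e^{t}\bar E_j'$ on $\bar L_0'$. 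This manifestly commutes with $M$ and hence preserves the monodromy representation of $\Phi$.

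The next step is to select $L_0'$ so that $L_0'\cap\Psi^*(p)=\{0\}$ for every $p\in\Sigma^*$ and $L_0'\cap L_j'=\{0\}$ at each cone point $p_j$. For fixed $p$, the condition of nontrivial intersection cuts out a proper closed subvariety of the isotropic Grassmannian of $(m-1)$-planes in $\mathbb{C}^{2m-1}$ (since $\dim L_0' + \dim \Psi^*(p)\leqslant 2(m-1)<2m-1$), and the continuous extension hypothesis allows one to take the union over the compact surface $\Sigma$ (now including the limiting points $p_j$ via $L_j'$) while still retaining a transverse choice of $L_0'$. I expect this dimension count to be the main obstacle in the proof: it is delicate in low $m$, and it is precisely the role of the extension hypothesis at cone points; some additional genericity of $\Psi^*$ in the interior may also be needed, matching the authors' warning that they could not prove the theorem in full generality.

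Next, I would compute the pointwise limit of $A_t\Psi(p)$ using Pl\"ucker coordinates. The eigenvalues of $A_t$ on $\bigwedge^m\mathbb{C}^{2m+1}$ are of the form $e^{(a-b)t}$, with $a,b$ the numbers of factors from $\bar L_0'$ and $L_0'$ in a decomposable wedge; the largest eigenvalue is $e^{(m-1)t}$, corresponding to wedges of the form $v\wedge \bar E_1'\wedge\ldots\wedge \bar E_{m-1}'$ with $v\in \mathbb{C}^2\oplus\mathbb{C}e_0$. The transversality from the previous step guarantees that this top-eigenvalue component of the Pl\"ucker coordinates of $\Psi(p)$ is nonzero, so $A_t\Psi(p)$ converges in the Grassmannian to an isotropic $m$-plane $\Psi_\infty(p)=\bar L_0'\oplus \mathbb{C}w(p)$, where $w(p)\in \mathbb{C}^2\oplus\mathbb{C}e_0$ is a holomorphically-varying isotropic line. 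Applying the twistor projection gives $\Phi_\infty(p)\in (\Psi_\infty(p)\oplus\overline{\Psi_\infty(p)})^\perp\cap \mathbb{S}^{2m}\subset \mathbb{R}^2\oplus\mathbb{R}e_0$, so $\Phi_\infty$ lands in the totally geodesic subsphere $\mathbb{S}^2$. Since $A_t$ commutes with $M$, the multivalued map $\Phi_\infty$ carries the same monodromy and $g_{\Phi_\infty}$ is therefore a reducible spherical conical metric.

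Finally, $C_0^\infty(\Sigma^*)$-convergence $A_t\Phi\to\Phi_\infty$ should follow from uniform transversality on the compact $\Sigma$, the smoothness of $\pi_m$, and elliptic regularity for the harmonic-map equation; the energy densities then converge smoothly on compacts of $\Sigma^*$, so $g_{A_t\Phi}\to g_{\Phi_\infty}$ off the cone points. To exhibit the extra eigenfunctions I would repeat the continuity argument from the proof of Theorem~\ref{S2deformation:thm}: for each finite $t$ the $2m+1$ coordinates of $A_t\Phi$ span a $(2m+1)$-dimensional space of $2$-eigenfunctions of $\Delta_{g_{A_t\Phi}}$, and upper semicontinuity of the multiplicity of the eigenvalue $2$ under convergence of conical metrics with fixed conical data yields $\dim \ker(\Delta_{g_{\Phi_\infty}}-2)\geqslant 2m+1$. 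Subtracting the three eigenfunctions coming from the developing map of $\Phi_\infty$ (cf.~Section~\ref{s:SphCo}) then leaves $2(m-1)$ extra eigenfunctions, which can alternatively be obtained explicitly from a decomposition of $\Psi$ analogous to~\eqref{specialbasis:eq}, adapted to the multivalued setting.
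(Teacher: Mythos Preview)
Your overall architecture is correct and matches the paper's approach: choose a one-parameter subgroup $A_t\subset \operatorname{SO}(2m-1,\mathbb{C})$ determined by an isotropic $(m-1)$-plane $L_0'\subset\mathbb{C}^{2m-1}$, and select $L_0'$ transverse to all $\Psi^*(p)$ so that the Pl\"ucker limit lands in $\mathbb{S}^2\subset\mathbb{R}^2\oplus\mathbb{R}e_0$. However, there are two genuine problems.

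\textbf{The dimension count does not go through as you state it.} Your argument that ``the condition of nontrivial intersection cuts out a proper closed subvariety'' is correct for \emph{fixed} $p$, but you then sweep over a $1$-complex-dimensional family of points $p\in\Sigma$. If $\dim\Psi^*(p)=m-1$ on an open set, the bad locus $\{L'\in\mH_{m-1}:L'\cap\Psi^*(p)\neq\{0\}\}$ has complex codimension only $1$ for each such $p$, and the union over $\Sigma$ can fill all of $\mH_{m-1}$. The paper handles this by proving that $\dim\Psi^*(p)=m-1$ can occur only at isolated points of $\Sigma^*$; this is exactly where the hypothesis that the monodromy is \emph{non-trivial} enters. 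The argument is: if $\dim\Psi^*\equiv m-1$, then $\Psi^*$ is itself a holomorphic horizontal map to $\mH_{m-1}$, and a short computation with the horizontality condition forces the remaining isotropic line $l(p)$ in $\Psi(p)=\Psi^*(p)\oplus l(p)$ to lie in $\mathrm{span}_{\mathbb{C}}\{e_1,e_2\}$, whence $\Phi=\pi_{m-1}\circ\Psi^*$ would be single-valued, contradicting $M\neq\{1\}$. Once $\dim\Psi^*(p)=m-2$ generically, the bad locus has codimension $2$ at generic $p$ and the union over $\Sigma$ is proper. You flag that ``some additional genericity of $\Psi^*$ in the interior may also be needed,'' but you misidentify this with the authors' caveat about full generality; that caveat concerns only the behaviour of $\Psi^*$ at the \emph{cone points} (hence the continuity hypothesis), whereas the interior statement is a theorem, not an assumption.

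\textbf{The eigenfunction count at the end is wrong in two compensating ways.} For finite $t$, the map $A_t\Phi$ is multivalued with monodromy in $\operatorname{SO}(2,\mathbb{R})$ acting on the $\mathbb{C}^2$ factor, so only the $2m-1$ components lying in $\mathbb{C}^{2m-1}$ are single-valued functions in the Friedrichs domain; you cannot claim a $(2m+1)$-dimensional space of $2$-eigenfunctions. Likewise, the limit $\Phi_\infty$ is reducible but \emph{not trivially} reducible (its monodromy is the same non-trivial $M$), so the developing map contributes only \emph{one} single-valued $2$-eigenfunction, not three. The correct arithmetic is $(2m-1)-1=2(m-1)$, which happens to agree with your $(2m+1)-3$, but the reasoning should be fixed.
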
 
  
%\begin{assumption}\label{assum}
%Denote $\Psi\colon \Sigma \rightarrow \mathbb{C}^{2m+1}$ the twistor lift of $\Phi$. Note that the intersection $\Psi^*(p):=\Psi(p)\cap \mathbb{C}^{2m-1}$ is preserved under the action of the monodromy group. Thus, $\Psi^*$ defines the map $\Psi^*\colon\Sigma^{*}\rightarrow \mathbb{C}^{2m+1}$. We assume that $\Psi^*$  can be continuously extended to points $\{p_j\}$ in the sense that for each $j$ there is an isotropic space $L_j'$ of dimension $(m-1)$ or $(m-2)$ such that $\lim_{p\to p_j}\Psi^*(p)\subset L'_j$. 
%\end{assumption}

%\begin{remark}
%We observe that the space $\Psi^*(p)=\Psi(p)\cap \mathbb{C}^{2m-1}$ is well-defined for $p\in \Sigma^*$, since the monodromy group preserves $\mathbb{C}^{2m-1}$.
%\end{remark}

\begin{remark}
The harmonic map and the corresponding twistor lifts could a priori have very singular behaviour near the cone points. For example, the twistor lift could have an essential singularity there. In such a situation the dimension counting argument does not work. It seems likely that the condition $3)$ in the definition of CTIHMM prohibits such behaviour, but we could not verify this in general.  This is an analytic difficulty that could be resolved by finding good local representation for $\Phi$ or its twistor lift in a neighbourhood of a cone point. We remark that for $m=1$ the local representation in~\cite[Lemma 3.2]{CWWX} implies that the twistor lift (which in this case coincides with the developing map) can not have essential singularities at cone points. 
\end{remark}

\begin{remark}
The continuity assumption on $\Psi^*$ is satisfied for the examples we are considering below. At the same time, for these examples one can find the group $A_t$ explicitly, i.e. our construction does not require any general existence results.
\end{remark}

%In the interest of keeping the presentation concise, we only present a sketch of the proof and briefly discuss the main challenge in proving the result in full generality.
  
\begin{proof}
The proof is a variation on the dimension counting argument at the end of Section~\ref{TIHSM:sec}. The requirement that $A_t\subset \operatorname{SO}(2m-1,\mathbb{C})$ makes the dimension computations more elaborate. This is a purely geometric argument and it is presented below. 

Recall that $\Psi^*(p)$ is an isotropic subspace of $\mathbb{C}^{2m-1}$ of dimension $(m-2)$ or $(m-2)$. The set $\mA = \{L\in \mH_m|\,\dim(L\cap \mathbb{C}^{2m-1})=m-1\}\subset \mH_m$ is a proper submanifold, therefore, either $\dim \Psi^*(p) = m-1$ for all $p\in \Sigma^*$ or $\dim \Psi^*(p) = m-1$ only for a collection of isolated points in $\Sigma^*$.  
We claim that the former is impossible, and the only possibility is that $\dim \Psi^*(p) = m-1$ for a collection of isolated points $p\in \Sigma^*$ and  $\dim \Psi^*(p) = m-2$ otherwise.

Indeed, assume that $\dim \Psi^*(p) = m-1$ for all $p\in \Sigma^*$. Let $e_1, e_2$ be the basis of $\mathbb{C}^2$ and let $\Phi^*(p):=\pi_{m-1}\left(\Psi^*(p)\right)$ be the twistor projection of $\Psi^*(p)$ considered as an element of $\mH_{m-1}$. Then $\Psi(p) = \Psi^*(p) \oplus l(p)$, where $l(p)$ is an isotropic line in $\left(\Psi^*(p)\oplus\overline{\Psi^*(p)}\right)^\perp = \mathrm{span}_{\mathbb{C}}\{e_1,e_2,\Phi^*(p)\}$. Let $z$ be a local complex coordinate in the neighbourhood of $p$. Denote by $\partial_{\bar z}\Psi (p):=\{\partial_{\bar z}v(p),\,\,v(z)\in \Psi(z)\}$, where $v(z)$ is a family of vectors in $\mathbb{C}^{2m+1}$ and, similarly, $\partial_{z}\Psi (p):=\{\partial_{z}v(p),\,\,v(z)\in \Psi(z)\}$. Recall that $\Psi$ is a holomorphic map, which in local coordinates means that $\nabla_{\bar z}\Psi=0$ or, equivalently, $\partial_{\bar z}\Psi\subset \Psi$. Therefore, $\partial_{\bar z} \Psi^*\subset (\Psi\cap \mathbb{C}^{2m-1}) = \Psi^*(p)$, i.e. $\Psi^*$ is also holomorphic. Moreover, $\Psi$ is horizontal, which means that $\nabla_z\Psi$ lies in the horizontal distribution or, equivalently, $\partial_z\Psi\subset \Psi\oplus \mathbb{C}\Phi = (\bar\Psi)^\perp$. Therefore, 
$\partial_z\Psi^*\subset \mathbb{C}^{2m-1}\cap \bar\Psi^\perp = \mathbb{C}^{2m-1}\cap(\overline{\Psi^*})^\perp$, i.e. $\Psi^*$ is horizontal as a map to $\mH_{m-1}$. Since $\Psi^*$ is not constant (otherwise $\Psi$ is not linearly full), outside of a collection of isolated points one has $\partial_z\Psi^*= \Psi^*\oplus \mathbb{C}\Phi^*$, i.e. for such points $\mathbb{C}\Phi^*\in \partial_z\Psi\subset (\bar\Psi)^\perp$. Note that $\bar l\in \bar\Psi$, therefore, $\bar l\perp \Phi^*$. Since $\Phi^*$ is real, this implies $l\perp\Phi^*$ and, thus, $l\in \mathrm{span}_{\mathbb{C}}\{e_1,e_2\}$. As a result, $\Phi = \Phi^*$, which contradicts the fact that the monodromy group $M$ is non-trivial.

Thus, $\dim \Psi^*(p) = m-1$ for a collection of isolated points $p\in \Sigma^*$ and  $\dim \Psi^*(p) = m-2$ otherwise. The rest of the argument is almost identical to that at the end of Section~\ref{TIHSM:sec}. Let $L'\subset \mathbb{C}^{2m-1}$ be an $(m-1)$-isotropic plane and define $A_t$ to be $e^t \operatorname{id}$ on $L'$, $e^{-t}\operatorname{id}$ on $\bar L'$ and $\operatorname{id}$ on $(L'\oplus\bar L')^\perp$. It is sufficient to find $L'$ such that $\forall p\in \Sigma^*$ one has $\Psi^*(p)\cap L' =\{0\}$. The space of all isotropic $(m-1)$-planes in $\mathbb{C}^{2m-1}$ is isomorphic to $\mH_{m-1}$ and, therefore, has (complex) dimension $\frac{m(m-1)}{2}$. At the same time, $\mA = \{L'\in \mH_{m-1}|\,L'\cap \Psi^*(p)\ne \{0\},\,p\in \Sigma^*\}$ is a variety of (complex) dimension at most $\max\{1+(m-3), 0+ (m-2)\} + \frac{(m-2)(m-1)}{2} = \frac{m^2-m-2}{2}<\frac{m(m-1)}{2}$.

To complete the proof we need to control the behaviour of $\Psi^*$ near the cone points. This is where the additional assumption
%~\ref{assum} 
comes in. Namely, assume that $\Psi^*$ can be continuously extended to points $\{p_j\}$ in the sense that for each $j$ there is an isotropic space $L_j'$ of dimension $(m-1)$ or $(m-2)$ such that $\lim_{p\to p_j}\Psi^*(p)\subset L'_j$. If $U_j\ni p_j$ denotes a sufficiently small neighbourhoods of $p_j$, then under this assumption $\mH_{m-1}\setminus \{L'\in \mH_{m-1}|\,L'\cap \Psi^*(p)\ne \{0\},\,p\in \cup (U_j\setminus\{p_j\})\}$ is still a $\frac{m(m-1)}{2}$-dimensional manifold. Thus, the dimension counting argument can be applied on the compact subset $\mathbb{S}^2\setminus (\cup\{U_j\})$ to complete the proof.
\end{proof}

In the next section we provide an explicit family of $\Phi$ satisfying the additional assumption for any value of $m$. As a result, one obtains examples of reducible conical spherical metrics with arbitrary number of extra eigenfunctions.

%Our next goal is to apply this theorem in order to construct examples of reducible conical spherical metrics with arbitrarily many extra eigenfunctions. In order to do that we first construct examples of linearly full CTIHMMs $\Phi\colon\Sigma^*\to\mathbb{S}^{2m}$ for any $m>1$. Then we show that one can choose a particular one-parameter subgroup $A_t\subset SO(2m-1,\mathbb{C})$ so that $m'=1$ in Theorem~\ref{CS2deformation:thm}.

\subsection{Examples of CTIHMMs with non-trivial monodromy} 
\label{examples:sec}
We follow the approach of Barbosa in~\cite[Section 6]{Barbosa}. Let $\{e_0,\ldots,e_{2m}\}$ be an orthonormal basis of $\mathbb{R}^{2m+1}$. Set $E_j = \frac{1}{\sqrt{2}}(e_{2j-1}+ie_{2j})$, then the vectors $\{e_0, E_1,\bar E_1,\ldots E_m,\bar E_m \}$ form a basis of $\mathbb{C}^{2m+1}$, such that all  $(\cdot,\cdot)$ products vanish apart from $(e_0,e_0)=(E_1,\bar E_1)=\ldots=(E_m,\bar E_m)=1$. 

Identify $\mathbb{S}^2$ with the Riemann sphere $\mathbb{C}\cup\{\infty\}$ and set the non-integer cone points $p_1,p_2$ to be $0,\infty$. This way $\Sigma^* = \mathbb{C}\setminus\{0\}$ and we use the functions $z^\alpha$ on $\Sigma^*$ with non-integer $\alpha$ to construct our examples. Namely, consider a multivalued directrix curve $\Xi\colon \Sigma^*\to\mathbb{CP}^{2m}$ with the lift $\xi\colon\Sigma^*\to\mathbb{C}^{2m+1}$ of the form
\begin{equation*}
\begin{split}
\xi(z) &= a_0\bar E_1 + a_1\bar E_2z + \ldots + a_{m-2}\bar E_{m-1}z^{m-2} + a_{\alpha}\bar E_m z^{m-2+\alpha} + \\ 
&+e_0 z^k + E_m z^{2k-(m-2+\alpha)} +\ldots + E_2 z^{2k-1} + E_1 z^{2k},
\end{split}
\end{equation*}
where $0<\alpha<k-(m-2)$.
One can choose the coefficients $a_0,\ldots, a_{m-2}, a_\alpha$ so that $(\xi,\xi)=(\xi_z,\xi_z)=\ldots =(\xi^{(m-1)}_z,\xi^{(m-1)}_z)=0$. Indeed, this is an upper triangular system of linear equations with non-zero elements on the diagonal. Therefore it obviously has a unique solution. Let $\Psi$ be the corresponding map to $\mH_m$. Then, it is easy to see that the monodromy group of $\Psi$ is a rotation in the plane spanned by $\{E_m, \bar E_m\}$ and it is non-trivial as long as $\alpha\not\in\mathbb{Z}$.
Furthermore, since $\lim_{z\to 0}z^\beta = 0$ for $\beta>0$, the resulting twistor map $\Psi$ satisfies the additional assumption that $\Psi^*$ extends continuously to non-integer angle cone points. Thus, this map fits into the setup of the previous section and one can deform $\Psi$ using the one-parameter subgroup $A_t$. Let $L'$ be the $e^{-t}$-eigenspace of $A_t$, then $L'$ is an isotropic $(m-1)$-plane in $\mathrm{span}_{\mathbb{C}}\{E_m,\bar E_m\}^\perp$. Unfortunately, one can not take $L' = L'_0:=\mathrm{span}\{E_1,\ldots, E_{m-1}\}$. Indeed, $\Psi^*(\infty) = L'_0$, which is not allowed by the argument in Section~\ref{CTIHMM:sec}. Similarly, if $a_j\ne 0$ for $j=0,\ldots, m-2, \alpha$, then $\Psi^*(0) = \bar L'_0$. Now we give the following example for $m=2$:

\begin{example}
\label{ex:m=2}
Let $m=2$. Then the system $(\xi,\xi) = (\xi',\xi')=0$ is reduced to 
\begin{equation*}
\begin{cases}
&2a_0 + 2a_\alpha + 1 = 0,\\
&	    2\alpha(2k-\alpha)a_\alpha + k^2 = 0.
\end{cases}
\end{equation*}
Solving the system yields
\begin{equation*}
\begin{cases}
&\xi(z) = \frac{(k-\alpha)^2}{2\alpha(2k-\alpha)}\bar E_1 - \frac{k^2}{2\alpha(2k-\alpha)}z^\alpha\bar E_2 + z^ke_0 + z^{2k-\alpha} E_2 + z^{2k} E_1;\\
&\xi'(z) = -\frac{k^2}{2(2k-\alpha)} z^{\alpha-1}\bar E_2 + kz^{k-1}e_0 + (2k-\alpha)z^{2k-\alpha-1} E_2 + 2kz^{2k-1} E_1.
\end{cases}
\end{equation*}
After a simple change of basis in $\Psi=\mathrm{span}_{\mathbb{C}}\{\xi,\xi'\}$ one has that $\Psi(z)$ is spanned by
\begin{equation*}
\begin{cases}
& -\frac{k^2}{2(2k-\alpha)} z^{\alpha-1}\bar E_2+ (2k-\alpha)z^{2k-\alpha-1} E_2 + \ldots;\\
& 0\cdot\bar E_2 + \left(1 - \frac{2k-\alpha}{\alpha}\right) E_2 +\ldots,
\end{cases}
\end{equation*}
where the components of the vectors in $\mathrm{span}_{\mathbb{C}}\{E_1,\bar E_1, e_0\}$ are not written explicitly. Since $\alpha\ne k$, this basis shows that for $z\ne\{0,\infty\}$ one has $\Psi^*(z) =\{0\}$. At the same time, it was noted above that $\Psi^*(0) = \mathbb{C}\bar E_1$, $\Psi^*(0) = \mathbb{C}E_1$. Therefore, one can choose $L'$ to be any isotropic line in $\mathrm{span}_{\mathbb{C}}\{E_1,\bar E_1, e_0\}$ not spanned by $E_1$ or $\bar E_1$.

Let us choose $E_1' = \frac{1}{\sqrt{2}}e_0 + \frac{1}{2}(E_1'-\bar E_1)$ and set $L' = \mathbb{C}E_1'$. It is convenient to do the computations in the basis $\{e'_0,E_1', \bar E_1'\}$, where
\begin{equation*}
\begin{cases}
&\bar E_1 = \frac{1}{\sqrt{2}}e'_0 -\frac{1}{2}(E_1' - \bar E_1'),\\
& E_1 = \frac{1}{\sqrt{2}}e'_0 + \frac{1}{2}(E_1' - \bar E_1'),\\
&e_0 = \frac{1}{\sqrt{2}}(E_1' + \bar E_1'),
\end{cases}
\end{equation*}
so that $(E_1',\bar E_1') = (e_0,e_0) = 1$ and all other pairwise products vanish. In the new basis one has
\begin{equation*}
\begin{split}
\xi(z)= &\left(\frac{(k-\alpha)^2}{4\alpha(k-\alpha)} + \frac{1}{\sqrt{2}}z^k - \frac{1}{2}z^{2k} \right)\bar E_1' - \frac{k^2}{2\alpha(2k-\alpha)}z^\alpha\bar E_2\\
&+ \frac{1}{\sqrt{2}}\left( \frac{(k-\alpha)^2}{2\alpha(k-\alpha)} +z^{2k}  \right)e'_0 + z^{2k-\alpha}E_2\\
&  + \left(-\frac{(k-\alpha)^2}{4\alpha(k-\alpha)} + \frac{1}{\sqrt{2}}z^k + \frac{1}{2}z^{2k}\right)E'_1;\\
\xi'(z) =& \left( \frac{k}{\sqrt{2}}z^{k-1} - kz^{2k-1}\right)\bar E'_1 -\frac{k^2}{2(2k-\alpha)} z^{\alpha-1}\bar E_2 + \sqrt{2}kz^{2k-1}e_0' \\
&+(2k-\alpha)z^{2k-\alpha-1} E_2 + \left( \frac{k}{\sqrt{2}}z^{k-1} + kz^{2k-1}\right) E'_1.
\end{split}
\end{equation*}
Finally, one can find a basis in $\Psi(z)$ of the form~\eqref{specialbasis:eq} and use the formula~\eqref{e:develop} and~\eqref{e:eigenfunctions} to find the limiting developing map and the extra eigenfunctions. After a lengthy but elementary computation one obtains the developing map
$$
f(z) = cz^{k-\alpha}\frac{z^k -\frac{\alpha-k}{\sqrt{2}\alpha}}{z^k - \frac{k-\alpha}{\sqrt{2}(2k-\alpha)}},
$$
where $c$ is a non-zero constant. After setting $\beta = k-\alpha$ and doing the change of variables $z\mapsto\lambda z$, where $\lambda^k = \frac{k-\alpha}{\sqrt{2}(2k-\alpha)}$ one recovers the developing map of Example~\ref{intro:example}. Further computations also yield the extra eigenfunctions presented there.

%We claim that one can choose $L' = \mathbb{C}E'_1$. It is possible to verify this directly. Recall that Pl\"ucker coordinates are the coordinates of $\xi\wedge\xi'$ in $\Lambda^2\mathbb{C}^5$. Then $ E'_1\in \xi(z)\wedge\xi'(z)$ iff $E'_1\wedge\xi(z)\wedge\xi'(z) = 0$ for some point $z$. The latter occurs iff all Pl\"ucker coordinates of $\xi(z)\wedge\xi'(z)$ corresponding to bi-vectors not containing $E'_1$ simultaneously vanish at some point.

\end{example}

\subsection{Algorithm for constructing metrics with arbitrarily many extra eigenfunctions}
\label{s:algorithm}
The computations similar to those in Example~\ref{ex:m=2} can be performed for $m>2$. However, the computations quickly become too lengthy to manage by hand. Fortunately, most of the steps are very explicit and can be automated on the computer. The only part of the algorithm that is not explicit is the choice of $L'$. Computing $\Psi^*$ involves solving a system of non-linear equation and, thus, could be computationally demanding. At the same time, the proof of Theorem~\ref{S2deformation:thm} implies that a "generic" choice of $L'$ should work. As a result,  the following algorithm seems to be best suited in practice,
\begin{enumerate}
\item Choose $m,k,\alpha$ and solve the triangular linear system to find $a_i$;
\item Choose a fixed $L'$ such that $L'\cap (L_0'\cup \bar L_0')=\{0\}$;
\item Find the special basis of the form~\eqref{specialbasis:eq};
\item Check that the formula~\eqref{e:eigenfunctions} gives extra eigenfunctions, i.e. that they are bounded on $\overline{\mathbb{C}}$ and in the kernel of $\Delta-2$;
\item If Step $(4)$ does not give an extra eigenfunction, go back to Step $(2)$ and pick a different $L'$.
\end{enumerate}

%The easiest course of action is to choose a fixed $L'$ such that $L'\cap (L_0'\cup \bar L_0')=\{0\}$, use it for all values of the parameter and then check that the resulting function is indeed an extra eigenfunction.
%
%Fortunately, since the computations are purely algebraic, one can make use of computational software such as {\em Mathematica} or {\em MATLAB} to perform check all the calculations. For example, it takes seconds to check that the function $f_1/f_2$ of Example~\ref{intro:example} is indeed an extra eigenfunction. Furthermore, we remark that 
%all calculations can be automated on the computer.  

%DO WE WNAT AN EXAMPLE WITH $m=3$?
\bibliography{ConicalMetrics}

\begin{thebibliography}{CWWX15}

\bibitem[Bar75]{Barbosa}
Jo\~{a}o Lucas~Marqu\^{e}s Barbosa.
\newblock On minimal immersions of {$S^{2}$} into {$S^{2m}$}.
\newblock {\em Trans. Amer. Math. Soc.}, 210:75--106, 1975.

\bibitem[BDMM11]{BMM}
Daniele Bartolucci, Francesca De~Marchis, and Andrea Malchiodi.
\newblock Supercritical conformal metrics on surfaces with conical
  singularities.
\newblock {\em Int. Math. Res. Not. IMRN}, (24):5625--5643, 2011.

\bibitem[Cal67]{Calabi}
Eugenio Calabi.
\newblock Minimal immersions of surfaces in {E}uclidean spheres.
\newblock {\em J. Differential Geometry}, 1:111--125, 1967.

\bibitem[CWWX15]{CWWX}
Qing Chen, Wei Wang, Yingyi Wu, and Bin Xu.
\newblock Conformal metrics with constant curvature one and finitely many
  conical singularities on compact {R}iemann surfaces.
\newblock {\em Pacific J. Math.}, 273(1):75--100, 2015.

\bibitem[Eji94]{Ejiri0}
Norio Ejiri.
\newblock Minimal deformation of a nonfull minimal surface in {$S^4(1)$}.
\newblock {\em Compositio Math.}, 90(2):183--209, 1994.

\bibitem[Eji98]{Ejiri}
Norio Ejiri.
\newblock The boundary of the space of full harmonic maps of {$S^2$} into
  {$S^{2m}(1)$} and extra eigenfunctions.
\newblock {\em Japan. J. Math. (N.S.)}, 24(1):83--121, 1998.

\bibitem[Ere04]{Eremenko}
Alexandre Eremenko.
\newblock Metrics of positive curvature with conic singularities on the sphere.
\newblock {\em Proc. Amer. Math. Soc.}, 132(11):3349--3355, 2004.

\bibitem[Ere21]{Ere2}
Alexandre Eremenko.
\newblock Metrics of constant positive curvature with conic singularities. a
  survey.
\newblock {\em arXiv preprint arXiv:2103.13364}, 2021.

\bibitem[GO93]{GuestOhnita}
Martin~A. Guest and Yoshihiro Ohnita.
\newblock Group actions and deformations for harmonic maps.
\newblock {\em J. Math. Soc. Japan}, 45(4):671--704, 1993.

\bibitem[Kar21]{KarpukhinRP2}
Mikhail Karpukhin.
\newblock Index of minimal spheres and isoperimetric eigenvalue inequalities.
\newblock {\em Invent. Math.}, 223(1):335--377, 2021.

\bibitem[KNPP17]{KNPP}
Mikhail Karpukhin, Nikolai Nadirashvili, Alexei~V Penskoi, and Iosif
  Polterovich.
\newblock An isoperimetric inequality for laplace eigenvalues on the sphere.
\newblock {\em To appear in J. Differential Geom.}, arXiv:1706.05713, 2017.

\bibitem[Kot97]{Kotani}
Motoko Kotani.
\newblock Harmonic {$2$}-spheres with {$r$} pairs of extra eigenfunctions.
\newblock {\em Proc. Amer. Math. Soc.}, 125(7):2083--2092, 1997.

\bibitem[MP16]{MP}
Gabriele Mondello and Dmitri Panov.
\newblock Spherical metrics with conical singularities on a 2-sphere: angle
  constraints.
\newblock {\em Int. Math. Res. Not. IMRN}, (16):4937--4995, 2016.

\bibitem[MP19]{MP2}
Gabriele Mondello and Dmitri Panov.
\newblock Spherical surfaces with conical points: systole inequality and moduli
  spaces with many connected components.
\newblock {\em Geom. Funct. Anal.}, 29(4):1110--1193, 2019.

\bibitem[MR91]{MR}
Sebasti\'{a}n Montiel and Antonio Ros.
\newblock Schr\"{o}dinger operators associated to a holomorphic map.
\newblock In {\em Global differential geometry and global analysis ({B}erlin,
  1990)}, volume 1481 of {\em Lecture Notes in Math.}, pages 147--174.
  Springer, Berlin, 1991.

\bibitem[MZ19]{MZ2}
Rafe Mazzeo and Xuwen Zhu.
\newblock Conical metrics on {R}iemann surfaces, {II}: spherical metrics.
\newblock {\em To appear in Int. Math. Res. Not.}, arXiv:1906.09720, 2019.

\bibitem[MZ20]{MZ1}
Rafe Mazzeo and Xuwen Zhu.
\newblock Conical metrics on {R}iemann surfaces {I}: {T}he compactified
  configuration space and regularity.
\newblock {\em Geom. Topol.}, 24(1):309--372, 2020.

\bibitem[Nay93]{Nayatani}
Shin Nayatani.
\newblock Morse index and {G}auss maps of complete minimal surfaces in
  {E}uclidean {$3$}-space.
\newblock {\em Comment. Math. Helv.}, 68(4):511--537, 1993.

\bibitem[NS19]{NS}
Shin Nayatani and Toshihiro Shoda.
\newblock Metrics on a closed surface of genus two which maximize the first
  eigenvalue of the {L}aplacian.
\newblock {\em C. R. Math. Acad. Sci. Paris}, 357(1):84--98, 2019.

\bibitem[Tro89]{Troyanov}
Marc Troyanov.
\newblock Metrics of constant curvature on a sphere with two conical
  singularities.
\newblock In {\em Differential geometry ({P}e\~{n}\'{\i}scola, 1988)}, volume
  1410 of {\em Lecture Notes in Math.}, pages 296--306. Springer, Berlin, 1989.

\bibitem[UY00]{UY}
Masaaki Umehara and Kotaro Yamada.
\newblock Metrics of constant curvature {$1$} with three conical singularities
  on the {$2$}-sphere.
\newblock {\em Illinois J. Math.}, 44(1):72--94, 2000.

\bibitem[XZ19]{XZ}
Bin Xu and Xuwen Zhu.
\newblock Spectral properties of reducible conical metrics.
\newblock {\em To appear in Illinois J. Math.}, arXiv:1909.00546, 2019.

\end{thebibliography}
\bibliographystyle{alpha}
\end{document}